\documentclass{article}

\usepackage{amsmath, amssymb, amsthm}
\usepackage{graphicx}
\usepackage[hidelinks]{hyperref}
\usepackage{algorithm, algpseudocode}
\usepackage{tikz}
\usepackage{myArticle}
\usepackage{myTools}
\newcommand{\bs}{\boldsymbol}
\usepackage{booktabs}
\usepackage{fancyhdr}
\usepackage{titlesec}

\title{Decision-Aware Predictions for Right-Hand Side Parameters in Linear Programs}
\newcommand{\shorttitle}{Decision-Aware Right-Hand Side Predictions}

\author{
Jackson Forner\thanks{jforner@smu.edu}, Miju Ahn\thanks{mijua@smu.edu}, and Harsha Gangammanavar\thanks{harsha@smu.edu} \\
Department of Operations Research and Engineering Management, \\ Southern Methodist University, Dallas, TX
}
\newcommand{\authorshort}{Forner, Ahn, Gangammanavar}

\date{First submission: November 30, 2025; Second submission: March 15, 2026}

\begin{document}

\maketitle

\begin{abstract}
This paper studies an integrated learning and optimization problem in which a prediction model estimates the right-hand-side parameters of a linear program (LP) using a contextual vector. Considering that such a prediction alters the feasible region of the LP, we aim to estimate the constraint set to contain the optimal solution of the underlying LP, given by the true right-hand side parameters. We propose formulations for training a prediction model by minimizing the decision error while accounting for feasibility, measured by a collection of historical primal and dual solutions. Our analysis identifies conditions under which a resulting predicted feasible region contains the true solution, and whether the latter solution achieves optimality for the predicted problem. To solve the alternative training problems, we employ existing LP and nonconvex programming solution methods. We conduct numerical experiments on a synthetic LP and a network optimization problem. 
Our results indicate that the proposed methods effectively implement the desired feasibility, compared to standard regression models.
\end{abstract}
\noindent\textbf{Keywords:} Linear programming; Integrated learning and optimization; Predict-then-optimize; Decision-aware learning 

\section{Introduction} \label{sec:intro}
In this paper, we consider a contextual linear programming (C-LP) problem of the following form:
\begin{equation}\label{prob:downstream}
    \min_{\bs{x}}~\big \{\inner{\bs{c}, \bs{x}} ~|~
    	\bs{Ax} \geq \bs{b}(\bs{\obs}),~ \bs{x}\geq \bs{0}\big \}.
\end{equation}
We seek an optimal solution to an instance of the above problem in the feasible region $\set{X}(\tilde{\bs{b}}) \coloneqq \{\bs{x} \geq \bs{0} ~ | ~ \bs{Ax} \geq \tilde{\bs{b}}\} \subseteq \RR^n$ that is parametrized by a deterministic cost coefficient $\bs{c} \in \RR^n$,  a deterministic constraint matrix $\bs{A} \in \RR^{m \times n}$, and a realization $\bs{b}(\bullet) \in \RR^m$ of a stochastic right-hand side $\tilde{\bs{b}}$. The stochastic right-hand side is correlated with a context, or a feature, vector that we denote by $\tilde{\bs{\obs}} \in \RR^d$. In other words, a joint probability distribution links the context vector to the problem's right-hand-side parameter. We consider a setting where we only observe a realization $\bs{\obs}$ of the feature $\tilde{\bs{\obs}}$ before the decision epoch and must determine a decision using a prediction $\hat{\bs{b}}(\obs)$ of the right-hand side vector. In this paper, we study different approaches to make such predictions.

To handle the contextual problems, of which \eqref{prob:downstream} is a particular form, integrated learning and optimization (ILO) is a particularly compelling approach. In this approach, models are trained to predict uncertain optimization parameters in a way that minimizes the error in the decisions made based on those predictions, rather than the error in the parameter predictions themselves. To the best of our knowledge, the earliest work on integrated learning and optimization is by \cite{bengio1997using}, who considered a time-series problem in portfolio selection. More recently, \cite{elmachtoub2022smart} considered linear programs with uncertain cost vectors and proposed novel loss functions for predicting these parameters, directly incorporating the optimization problem's structure into the learning process. Their approach created a new ``smart-predict-then-optimize" (SPO) framework that has since led to other similar works in recent years. {For example, \cite{mandi2020smart} and \cite{demirovic2020dynamic} investigated the SPO framework for predicting objective function coefficients in combinatorial optimization problems. Importantly, all of these works utilize a notion of regret defined as the suboptimality of a solution induced by a parameter prediction.} We refer interested readers to the work of \cite{sadana2024survey} for a more comprehensive review of ILO and, more broadly, contextual optimization. It is worth noting that most works on ILO assume the constraints to be deterministic and, therefore, do not apply to \eqref{prob:downstream}, where the constraints have stochastic right-hand sides.

{When the constraints are deterministic, feasibility is not a concern. On the other hand, when estimating constraint parameters, even slight errors in estimation can turn a nonempty feasible region into an empty one. Moreover, the true optimal solution may become infeasible under the predicted constraints, or conversely, the optimal solution of the predicted program may become infeasible under the true constraints. Despite these challenges, there have been a few works in the ILO framework that focus on predicting constraint parameters.}
For example, \cite{hu2023two} considered mixed-integer linear programs with uncertain parameters in the objective and constraints, and trained a neural network to estimate these values using a post-hoc regret loss function similar to that of \cite{elmachtoub2022smart}.
\cite{estes2023smart} also applied a regret-type loss function to estimate right-hand side parameters, which are in the second stage of a two-stage stochastic {linear} program.
{Even though these works have to do with the prediction of constraint parameters, they utilize loss functions where regret is defined in a similar way as for objective function coefficient estimation, namely, suboptimality of the optimal solution of the predicted program in the true program (note that \cite{hu2023two} also include a penalty for modifying a softly-commited stage 1 solution to a final stage 2 solution in their regret loss function). In contrast to this, we minimize a loss function which measures the duality gap of the true solution pair $(\bs{x}^\star, \bs{y}^\star)$ with respect to the predicted C-LP while enforcing that the predicted C-LP is feasible by making the true optimal solution a feasible solution.
In terms of solution approach, our setting is more similar to that of \cite{estes2023smart} since we train in an offline setting in a single step using a dataset of historical observations, unlike the work of \cite{hu2023two} which considers a two-stage approach to training a predictor}.

Our work extends the ILO framework to parameters in the constraints of optimization problems. In this regard, the main contribution of this paper is twofold.
\begin{enumerate}
    \item Errors in predicting constraint parameters may render the optimization problem infeasible. To address this, we present four different training problems that explicitly account for loss/decision errors, measured in terms of feasibility and suboptimality. These models use a training dataset comprising true or historical data, context vectors, right-hand-side vectors, and optimal solutions. The models differ in how they utilize the data. We identify the conditions under which we can recover the true optimal solutions as feasible or optimal solutions of the optimization problem with predicted parameters. We also present solution methods to solve the alternative training problems. 
    \item We validate the proposed training problems through numerical experiments conducted on a synthetic LP and a network optimization problem. We compare the feasibility and suboptimality metrics attained by predictors obtained using the alternate training problems, and also benchmark them against standard training approaches that do not account for decision errors. Our results indicate that our proposed models are able to leverage decision data to achieve high feasibility in terms of containing the true optimal solution when we explicitly enforce such constraints in the learning process, and that the feasibility of such models increases as we train on more data. The benchmark models, on the other hand, do not leverage decision data and attain very low feasibility. We also observe a trade-off between feasibility and suboptimality in our proposed models, namely, models that attain a higher feasibility perform worse in terms of suboptimality, and vice versa.
\end{enumerate}

The remainder of the paper is structured as follows: in \S\ref{sec:the_framework}, we present a framework identifying various goals for the problem of right-hand side parameter predictions in LPs. We then propose a set of novel learning problems aimed at achieving these goals and identify suitable algorithms to solve them. In \S\ref{sec:numerical_experiments}, we present numerical experiments using a synthetic and a network optimization problem that demonstrate the predictive utility of our proposed learning problems in terms of relevant decision metrics. We present all of the proofs and additional details in the Appendix.

\subsubsection*{Notations} Let $[N] \coloneqq \{1, \dots, N\}$. We define a collection of vectors as $(\bs{x}_i) \coloneqq (\bs{x}_1, \dots, \bs{x}_N)$. For a matrix $\bs{A} \in \mathbb{R}^{m \times n}$, we denote its $j$-th row as a vector $\bs{a}_j \in \mathbb{R}^{n}$.

\section{The Framework}\label{sec:the_framework}
We consider a setting where the goal is to identify an optimal solution to the C-LP problem \eqref{prob:downstream} using only an observation of the context vector $\bs{\obs}$. We denote the optimal primal-dual solution pair of \eqref{prob:downstream} with an arbitrary right-hand side vector $\bs{b}$ by $(\bs{x}^\star, \bs{y}^\star)$, and the associated optimal objective function value by $v^\star$. Notice that the optimal solutions and the values are functions of the right-hand side, but we suppress the explicit notation (e.g., $\bs{x}^\star(\bs{b})$) for notational convenience. To model this decision-making setting, we define a probability space $(\rvset \times \set{B}, \set{F}, \PP)$, where $\rvset \subset \RR^d$ is a compact set, $\set{B} \subseteq \RR^m$, $\set{F}$ is a $\sigma$-algebra over $\rvset \times \set{B}$, and $\PP$ is a joint probability distribution over $\rvset \times \set{B}$. For each $\bs{\obs} \in \rvset$, we assume that the optimal cost $v^\star$ induced by the corresponding right-hand side vector $\bs{b}$ is finite; that is, the problem instance corresponding to any observation of the context vector $\bs{\rv}$ is feasible and has a finite optimal cost. In our setting, we consider a collection of independent observations of the context and the right-hand side vectors, i.e., $\set{D}_N \coloneqq \{(\bs{\obs}_i, \bs{b}_i)\}_{i \in [N]}$ from $\rvset \times \set{B}$. These observations are either based on historical data or are generated using a simulation process. For any observation, we instantiate \eqref{prob:downstream} with $\bs{b_i}$ as the right-hand side of the constraints and solve it to optimality. We denote the resulting optimal primal-dual solution pair by $(\bs{x}_{i}^{\star}, \bs{y}_{i}^{\star})$, and the associated optimal value as $v_i^\star$. Using this optimal solution data, we denote a decision-induced dataset by $\set{D}_N^\star \coloneqq \{(\bs{\obs}_i, \bs{b}_i, \bs{x}_{i}^{\star}, \bs{y}_{i}^{\star})\}_{i \in [N]}$.

We consider a class $\set{P}$ of predictors of the right-hand side vector $p: \rvset \rightarrow \set{B}$. We denote $\hat{\bs{b}} \coloneqq p(\bs{\obs})$ as the predicted right-hand side vector corresponding to context vector $\bs{\obs}$. We can use the prediction $\hat{\bs{b}}$ to instantiate the C-LP problem \eqref{prob:downstream} to obtain the ``predicted problem.'' We denote the optimal primal-dual solution pair obtained by solving the predicted problem by $(\hat{\bs{x}}, \hat{\bs{y}})$ and the corresponding optimal objective function value $\hat{v}$, assuming they exist. We denote by $\ell(\cdot, \cdot)$ a loss function that measures, upon observation of the true right-hand vector, the error incurred when we use a prediction $\hat{\bs{b}}$ in lieu of the true right-hand side $\bs{b}$. As is customary in machine learning, we utilize $\set{D}_n$ (or $\set{D}_N^\star$) as the training data to identify the prediction model $p^\star \in \set{P}$ by solving the empirical risk minimization (ERM) problem:
\begin{equation} \label{prob:erm}
    \min_{p \in \set{P}} \bigg \{\frac{1}{N} \sum_{i \in [N]} \ell(p(\bs{\obs}_i), \bs{b}_i) \bigg \}.
\end{equation}
To evaluate the quality of the model $p^{\star}$ obtained from solving \eqref{prob:erm}, we utilize a validation dataset as $\set{V} \coloneqq \{(\bs{\obs}_i^v, \bs{b}_i^v)\}$. We denote the optimal primal-dual solution pair obtained by solving the predicted problem with $p^\star(\bs{\obs}_{i}^v)$ by $(\hat{\bs{x}}_i, \hat{\bs{y}}_i)$ and the corresponding optimal objective function value $\hat{v}_i$. In this paper, we focus on the class of linear prediction models $\set{P} = \{p~|~\exists \bs{W} \in \mathbb{R}^{m \times d} ~\text{s.t.}~p(\bs{\obs}) = \bs{W\obs}, \forall \bs{\obs} \in \rvset\}$, in which case the ERM problem \eqref{prob:erm} reduces to an optimization over prediction matrix $\bs{W}$.

To design an appropriate loss function, we consider the following set that captures the primal and dual feasible solutions of the predicted problem: 
\begin{equation} \label{eq:feasSet}
    \widehat{\set{S}}(\bs{\obs}; p) \coloneqq \left \{ (\bs{x},\bs{y})\left \vert 
    \begin{array}{l} 
        \bs{Ax} \geq p(\bs{\obs}),~ \bs{x} \geq \bs{0},\\ \bs{A}^\top \bs{y} \leq \bs{c},~ \bs{y} \geq \bs{0}
    \end{array}
    \right. \right \}.
\end{equation}
In the above, $\bs{y} \in \RR^m$ is the dual variable, an element of the LP dual feasible region given by $\set{Y} \coloneqq \{\bs{y} \geq \bs{0} ~|~\bs{A}^\top \bs{y} \leq \bs{c}\}$. We denote by $\widehat{\set{S}}^{\star}(\bs{\obs}; p)$ a refinement of the above set to include the first-order optimality conditions of the predicted problem. That is,
\begin{equation} \label{eq:optimalSolnSet}
    \widehat{\set{S}}^{\star}(\bs{\obs}; p) \coloneqq \left\{(\bs{x},\bs{y}) \in S(\bs{\obs}; p) ~ | ~ \inner{\bs{c},\bs{x}} = \inner{p(\bs{\obs}), \bs{y}}\right\}.
\end{equation}
In our setting, when we observe a new context vector $\bs{\obs}$, we predict the right-hand side as $\hat{\bs{b}} = p(\bs{\obs})$ and instantiate the C-LP \eqref{prob:downstream}. We anticipate that the optimal primal or dual solution of the true C-LP corresponding to the unobserved right-hand side $\bs{b}$ at least resides in the feasible region of the predicted problem; that is, there exists $\bs{y} \in \set{Y}$ such that $(\bs{x}^\star, \bs{y}) \in \widehat{\set{S}}(\bs{\obs}; p)$, or there exists $\bs{x} \in \set{X}(\bs{b})$ such that $(\bs{x}, \bs{y}^\star) \in \widehat{\set{S}}(\bs{\obs}; p)$. Better yet, we may hope for $(\bs{x}^\star, \bs{y}^\star) \in \widehat{\set{S}}(\bs{\obs}; p)$. The best case outcome is that $(\bs{x}^\star, \bs{y}^\star) \in \widehat{\set{S}}^{\star}(\bs{\obs}; p)$, implying that the true optimal solution pair is also optimal for the predicted problem.

\subsection{Different Approaches to Train a Predictor} \label{sec:diff_approaches}
Our ability to realize the minimal or optimistic expectations depends on how well we learn the model $p \in \set{P}$. For this task, we present a suite of training problems that utilize the decision-induced dataset $\set{D}_N^\star$ (referring to the literature, we may describe these learning problems as being decision-aware). In all our training problems, we aim to minimize a metric that can be interpreted as the duality gap, where the constraints capture our expectations identified in the definition of sets $\widehat{\set{S}}(\bs{\obs}; p)$ and $\widehat{\set{S}}^\star(\bs{\obs}; p)$. 

The first training problem in this suite directly targets the optimistic goal of $(\bs{x}^\star, \bs{y}^\star) \in \widehat{\set{S}}^{\star}(\bs{\obs}; p)$. Following \eqref{eq:optimalSolnSet}, we state this optimistic decision-aware learning (DAL) problem as
\begin{equation}\label{prob:optimisticTrain}
    \min_{p \in \set{P}} \bigg \{\frac{1}{N} \sum_{i \in [N]} (\inner{\bs{c} , \bs{x}_{i}^{\star}} -  \inner{p(\bs{\obs}_i), \bs{y}_{i}^{\star}}) ~\bigg \vert~ \bs{A}\bs{x}_{i}^{\star} \geq p(\bs{\obs}_{i}) \quad \forall i \in [N]\bigg \}.
\end{equation}
Notice that since $(\bs{x}_{i}^{\star}, \bs{y}_{i}^{\star})$ are optimal solution pairs to the true problem, they satisfy $\bs{x}_{i}^{\star}, \bs{y}_{i}^{\star} \geq \bs{0}$ and $\bs{A}^\top \bs{y}_{i}^{\star} \leq \bs{c}$. The additional constraint in \eqref{prob:optimisticTrain} ensures the feasibility of $\bs{x}^\star$ to the predicted problem. Notice that each summand in the above problem is nonnegative since the pair $\bs{x}_{i}^{\star}$ and $\bs{y}_{i}^{\star}$ are feasible to the predicted primal and dual problems, respectively. Moreover, this problem can be reformulated as an LP problem if the model $p$ is a linear model, and if its optimal value is zero, then it implies that $(\bs{x}_{i}^{\star}, \bs{y}_{i}^{\star}) \in \widehat{\set{S}}^{\star}(\bs{\obs}_i; p)$ for all $i \in [N]$. However, such an outcome may be unlikely.

Alternatively, if our goal is to at least recover the true primal optimal solutions from the predicted problems, then we can consider a primal-DAL training problem stated as
\begin{equation} \label{prob:fixedPrimalTrain}
    \min_{p \in \set{P}, (\bs{y}_i)} \bigg \{\frac{1}{N} \sum_{i \in [N]} (\inner{\bs{c} , \bs{x}_{i}^{\star}} -  \inner{p(\bs{\obs}_i), \bs{y}_i}) ~\bigg \vert~ \bs{A}\bs{x}_{i}^{\star} \geq p(\bs{\obs}_{i}),~\bs{A}^\top \bs{y}_i \leq \bs{c},~ \bs{y}_i \geq \bs{0} \quad \forall i \in [N]\bigg \}.
\end{equation}
Here, we insist that the true primal solutions reside in the primal feasible region of their corresponding predicted problem. In addition to the model $p$, we also determine the dual variables $\bs{y}_i$, which are required to satisfy the dual feasibility condition for each $i \in [N]$.

Since the dual feasibility requirements are imposed for every data point separately in \eqref{prob:fixedPrimalTrain}, it is possible that the above optimization problem chooses a weak model that satisfies $\bs{A}\bs{x}_{i}^{\star} \geq p(\bs{\obs})$ and still achieves a near-zero objective. To address this issue, we present a slight revision to the above problem:
\begin{equation} \label{prob:fixedPrimalBoundTrain}
    \min_{p \in \set{P}, (\bs{y}_i)} \bigg \{\frac{1}{N} \sum_{i \in [N]} (\inner{\bs{c} , \bs{x}_{i}^{\star}} -  \inner{p(\bs{\obs}_i), \bs{y}_i}) ~\bigg \vert~ \bs{A}\bs{x}_{i}^{\star} \geq p(\bs{\obs}_{i}) \geq \bs{b}_i,~\bs{A}^\top \bs{y}_i \leq \bs{c},~ \bs{y}_i \geq \bs{0} \quad \forall i \in [N]\bigg \}.
\end{equation}
Using the fact that $\bs{A}\bs{x}_{i}^{\star} \geq \bs{b}_i$, here we impose an additional restriction on the model as $p(\bs{\obs}_i) \geq \bs{b}_i$. 

If our goal is to recover the true dual solutions from the predicted problem, then we pose the following dual-DAL training problem:
\begin{equation} \label{prob:fixedDualTrain}
    \min_{p \in \set{P}, (\bs{x}_i)} \bigg \{\frac{1}{N} \sum_{i \in [N]} (\inner{\bs{c} , \bs{x}_i} -  \inner{p(\bs{\obs}_i), \bs{y}_{i}^{\star}}) ~\bigg \vert~ \bs{A}\bs{x}_{i} \geq p(\bs{\obs}_i), ~\bs{x}_i \geq \bs{0} \quad \forall i \in [N]\bigg \}.
\end{equation}
Notice that the above problem has a trivial solution, rendering it useless.

{
    Firstly, notice that the proposed training problems require different historical decision data that the optimizer may or may not have access to, i.e., optimal primal solutions $\bs{x}_i^\star$, optimal dual solutions $\bs{y}_i^\star$, or both. For example, if there are no historical optimal dual solutions $\bs{y}_i^\star$ in the dataset (and perhaps no easy way to obtain them), then one may only use formulation \eqref{prob:fixedPrimalBoundTrain}. Secondly, we rely on training data that comprises optimal solutions of programs that were previously solved. Even if the previously solved programs were degenerate, resulting in multiple primal or dual solutions, the solvers often report only of the optimal solutions. Therefore, our training dataset includes only a single primal-dual pair in every datapoint and do not account for degeneracy in the training data. Nevertheless, the impact of on the training problems is a worthwhile future research direction.}

\subsection{A Discussion on Recovering $(\bs{x}^\star, \bs{y}^\star)$}
Consider an arbitrary pair $(\bs{\obs},\bs{b})$ and associated $(\bs{x}^{\star}, \bs{y}^{\star})$. We are interested in whether $p(\bs{\obs}) = \hat{\bs{b}}$ yields a feasible region that recovers the pair of optimal solutions, i.e., $(\bs{x}^{\star}, \bs{y}^{\star}) \in \widehat{\set{S}}(\bs{\obs}; p)$. It is not difficult to verify that if the model underpredicts, that is, $\bs{b} \geq \hat{\bs{b}}$, then we have such a recovery. However, if the model overpredicts an index that belongs to a subset of indices, such an inclusion relationship does not hold. Proposition~\ref{prop:recoveryS} formally states these observations. For this purpose, we define the following sets of indices:
\begin{equation*}
    \set{J}^{=}(\bs{x}^{\star}) \coloneqq \{j \in [m] ~ | ~ \inner{\bs{a}_{j}, \bs{x}^{\star}} = b_{j} \} 
    \qquad \text{and} \qquad
    \set{J}^{+}(\bs{y}^{\star}) \coloneqq \{j \in [m] ~ | ~ y^{\star}_{j} > 0\}.
\end{equation*}
Among the two sets, we have $\set{J}^{+}(\bs{y}^{\star}) \subseteq \set{J}^{=}(\bs{x}^{\star})$ due to the complementary slackness condition of a linear program. For more meaningful analysis, we assume $\bs{y}^{\star} \neq \bs{0}$, i.e., $\set{J}^{+}(\bs{y}^{\star}) \neq \emptyset$.

\begin{proposition} \label{prop:recoveryS}
    Consider an arbitrary quadruple $(\bs{\obs}, \bs{b}, \bs{x}^{\star}, \bs{y}^{\star})$. Let $p(\bs{\obs}) = \hat{\bs{b}}$. The following holds:
    \begin{enumerate}
        \item[(i)] If $\bs{b} \geq  \hat{\bs{b}}$, then $(\bs{x}^{\star}, \bs{y}^{\star}) \in \widehat{\set{S}}(\bs{\obs}; p)$.
        \item[(ii)] If $\exists \ j' \in \set{J}^{=}(\bs{x}^{\star})$ such that $b_{j'} < \hat{b}_{j'}$, then $(\bs{x}^{\star}, \bs{y}^{\star}) \notin \widehat{\set{S}}(\bs{\obs}; p)$.
    \end{enumerate}
\end{proposition}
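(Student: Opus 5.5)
The plan is to check membership in $\widehat{\set{S}}(\bs{\obs}; p)$ directly from its definition in \eqref{eq:feasSet}. That set imposes four conditions on a pair $(\bs{x},\bs{y})$:
\begin{itemize}
    \item[(a)] $\bs{Ax} \geq \hat{\bs{b}}$;
    \item[(b)] $\bs{x} \geq \bs{0}$;
    \item[(c)] $\bs{A}^\top \bs{y} \leq \bs{c}$;
    \item[(d)] $\bs{y} \geq \bs{0}$.
\end{itemize}
Conditions (b)--(d) do not involve the prediction at all. Since $(\bs{x}^\star,\bs{y}^\star)$ is an optimal primal-dual pair of \eqref{prob:downstream} with right-hand side $\bs{b}$, primal feasibility gives $\bs{x}^\star \in \set{X}(\bs{b})$, so $\bs{x}^\star \geq \bs{0}$ and $\bs{Ax}^\star \geq \bs{b}$. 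Dual feasibility gives $\bs{y}^\star \in \set{Y}$, so $\bs{y}^\star \geq \bs{0}$ and $\bs{A}^\top\bs{y}^\star \leq \bs{c}$. Hence (b), (c), (d) hold automatically, and both parts of the proposition reduce to deciding whether $\bs{Ax}^\star \geq \hat{\bs{b}}$.

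For part (i), I chain the inequalities componentwise: $\bs{Ax}^\star \geq \bs{b} \geq \hat{\bs{b}}$. The first inequality is primal feasibility for the true problem and the second is the hypothesis. This gives (a), so $(\bs{x}^\star,\bs{y}^\star) \in \widehat{\set{S}}(\bs{\obs}; p)$.

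For part (ii), I take the index $j' \in \set{J}^{=}(\bs{x}^\star)$ from the hypothesis. By definition of $\set{J}^{=}$, $\inner{\bs{a}_{j'},\bs{x}^\star} = b_{j'}$, and by hypothesis $b_{j'} < \hat b_{j'}$. Together these give $\inner{\bs{a}_{j'},\bs{x}^\star} < \hat b_{j'}$, so the $j'$-th row of $\bs{Ax}^\star \geq \hat{\bs{b}}$ fails. Thus $\bs{x}^\star \notin \set{X}(\hat{\bs{b}})$, and the pair is not in $\widehat{\set{S}}(\bs{\obs}; p)$, whatever $\bs{y}^\star$ is.

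There is no real obstacle here. The only point to get right is that (ii) needs the constraint $j'$ to be tight at $\bs{x}^\star$: for a slack index, overprediction by a small amount need not break feasibility. Neither part uses the assumption $\set{J}^+(\bs{y}^\star)\neq\emptyset$ or complementary slackness. They matter only for the later statements about $\widehat{\set{S}}^\star$.
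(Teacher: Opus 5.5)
Your proposal is correct and follows essentially the same argument as the paper. In part (i), the paper's set inclusion $\set{X}(\bs{b}) \subseteq \set{X}(\hat{\bs{b}})$ is exactly your chain $\bs{A}\bs{x}^\star \geq \bs{b} \geq \hat{\bs{b}}$, and part (ii) uses the same violated tight row $j'$. Your explicit check that the dual conditions $\bs{A}^\top \bs{y}^\star \leq \bs{c}$ and $\bs{y}^\star \geq \bs{0}$ hold automatically is a detail the paper leaves implicit.
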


The proofs of all the results shown in this paper are presented in Appendix \S\ref{sec:appendix_proof}. 
We note that the contrapositive of the second statement of Proposition~\ref{prop:recoveryS} also serves as a necessary condition for $(\bs{x}^{\star}, \bs{y}^{\star}) \in \widehat{\set{S}}(\bs{\obs}; p)$. In other words, $\set{J}^=(\bs{x}^\star)$ is the smallest index set for which the overprediction of a component $b_j$ yields $(\bs{x}^{\star}, \bs{y}^{\star}) \notin \widehat{\set{S}}(\bs{\obs}; p)$. In fact, overprediction in $[m] \setminus \set{J}^{=}(\bs{x}^\star)$ is admissible. For example, consider the LP $\min_{\bs{x} \geq 0}\{x_1 + x_2 ~ | ~ x_1 \geq 1, -x_1 \geq -2, x_2 \geq 1, -x_2 \geq -2\}$. The unique optimal solution is $\bs{x}^\star = (1, 1)$ and $\set{J}^{=}(\bs{x}^\star) = \{1, 3\}$. Suppose we make the prediction $\hat{b} = (0.5, -1.5, 0.5, -2.5)$ of the true right-hand side vector $\bs{b} = (1, -2, 1, -2)$. Then we overpredicted the second component, i.e., $\hat{\bs{b}}_2 > \bs{b}_2$, yet one can easily verify that $\bs{Ax}^\star \geq \hat{\bs{b}}$, thus $(\bs{x}^{\star}, \bs{y}^{\star}) \in \widehat{\set{S}}(\bs{\obs}; p)$.

Although underprediction of $\bs{b}$ guarantees $(\bs{x}^{\star}, \bs{y}^{\star})$ to reside in the predicted feasible region, enforcing $p$ to have such a property may lead to a loose estimate of $\hat{\bs{b}}$. Instead, our proposed optimistic and primal-DAL models \eqref{prob:optimisticTrain}, \eqref{prob:fixedPrimalTrain}, and \eqref{prob:fixedPrimalBoundTrain} incorporate a relaxed condition, $\bs{A} \bs{x}^{\star} \geq p(\bs{\obs})$, to ensure $(\bs{x}^{\star}, \bs{y}^{\star}) \in \widehat{\set{S}}(\bs{\obs}; p)$. Under this requirement, we identify the conditions for $(\bs{x}^{\star}, \bs{y}^{\star})$ to achieve optimality of the predicted problem. These results are stated in Proposition~\ref{prop:recoverySstar} and Corollary~\ref{cor:squeezing}.
\begin{proposition} \label{prop:recoverySstar}
    Consider an arbitrary quadruple $(\bs{\obs}, \bs{b}, \bs{x}^{\star}, \bs{y}^{\star})$ and let $p(\bs{\obs}) = \hat{\bs{b}}$. Suppose $(\bs{x}^{\star}, \bs{y}^{\star}) \in \widehat{\set{S}}(\bs{\obs}; p)$. We have $(\bs{x}^{\star}, \bs{y}^{\star}) \in \widehat{\set{S}}^{\star}(\bs{\obs}; p)$ if and only if $b_j = \hat{b}_j$ for all $j \in \set{J}^{+}(\bs{y}^{\star})$.
\end{proposition}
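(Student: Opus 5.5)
Since $(\bs{x}^{\star}, \bs{y}^{\star}) \in \widehat{\set{S}}(\bs{\obs}; p)$ is assumed, the definition \eqref{eq:optimalSolnSet} reduces the claim to a single scalar identity: the pair lies in $\widehat{\set{S}}^{\star}(\bs{\obs}; p)$ if and only if $\inner{\bs{c},\bs{x}^{\star}} = \inner{\hat{\bs{b}}, \bs{y}^{\star}}$. The plan is to rewrite both sides via strong duality of the true problem and compare them index by index over the support of $\bs{y}^{\star}$.

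First, since $(\bs{x}^{\star}, \bs{y}^{\star})$ is an optimal primal-dual pair for the true LP with right-hand side $\bs{b}$, strong duality gives $\inner{\bs{c},\bs{x}^{\star}} = \inner{\bs{b},\bs{y}^{\star}}$. The condition therefore becomes $\inner{\bs{b} - \hat{\bs{b}}, \bs{y}^{\star}} = 0$. Because $y^{\star}_j = 0$ for $j \notin \set{J}^{+}(\bs{y}^{\star})$, this is
\begin{equation*}
\sum_{j \in \set{J}^{+}(\bs{y}^{\star})} y^{\star}_j \,(b_j - \hat{b}_j) = 0 .
\end{equation*}

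Next, we establish a sign for each term. For $j \in \set{J}^{+}(\bs{y}^{\star}) \subseteq \set{J}^{=}(\bs{x}^{\star})$, complementary slackness gives $\inner{\bs{a}_j,\bs{x}^{\star}} = b_j$. Membership in $\widehat{\set{S}}(\bs{\obs}; p)$ gives $\inner{\bs{a}_j,\bs{x}^{\star}} \ge \hat{b}_j$. Combining these, $b_j - \hat{b}_j \ge 0$ for every $j \in \set{J}^{+}(\bs{y}^{\star})$. This is equivalently the contrapositive of Proposition~\ref{prop:recoveryS}(ii). Since $y^{\star}_j > 0$ on this index set, every summand is nonnegative.

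Finally, both directions follow. If $b_j = \hat{b}_j$ on $\set{J}^{+}(\bs{y}^{\star})$, the sum vanishes and the duality-gap equality holds. Conversely, if the sum is zero, it is a sum of nonnegative terms, so each term $y^{\star}_j(b_j - \hat{b}_j)$ is zero; dividing by $y^{\star}_j > 0$ gives $b_j = \hat{b}_j$.

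The only delicate step is the sign argument in the third paragraph. It relies on combining complementary slackness with the assumed feasibility of $\bs{x}^{\star}$ for the predicted problem. Without that assumption the terms could cancel, and the "only if" direction would fail.
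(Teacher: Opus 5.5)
Your proof is correct and follows essentially the same route as the paper: you use strong duality to reduce membership in $\widehat{\set{S}}^{\star}(\bs{\obs}; p)$ to $\sum_{j \in \set{J}^{+}(\bs{y}^{\star})} y^{\star}_j (b_j - \hat{b}_j) = 0$, and then the sign $\hat{b}_j \leq b_j$ on $\set{J}^{+}(\bs{y}^{\star}) \subseteq \set{J}^{=}(\bs{x}^{\star})$ forces every term to vanish (the paper gets this sign by citing Proposition~\ref{prop:recoveryS}(ii)). The only cosmetic difference is that you run both directions through this one reduced identity, whereas the paper writes the two implications out separately.
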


\begin{corollary}\label{cor:squeezing}
    Consider an arbitrary quadruple $(\bs{\obs}, \bs{b}, \bs{x}^{\star}, \bs{y}^{\star})$ and $p(\bs{\obs}) = \hat{\bs{b}}$. If $\bs{A} \bs{x}^{\star} \geq \hat{\bs{b}} \geq \bs{b}$ then $(\bs{x}^{\star}, \bs{y}^{\star}) \in \widehat{\set{S}}^{\star}(\bs{\obs}; p)$.
\end{corollary}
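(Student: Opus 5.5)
The plan is to reduce Corollary~\ref{cor:squeezing} to Proposition~\ref{prop:recoverySstar}. Two things are needed: first that $(\bs{x}^\star,\bs{y}^\star)\in\widehat{\set{S}}(\bs{\obs};p)$, and then that $\hat b_j=b_j$ for every $j\in\set{J}^+(\bs{y}^\star)$.

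For membership in $\widehat{\set{S}}(\bs{\obs};p)$, I check the four defining conditions of \eqref{eq:feasSet} in turn.
\begin{itemize}
\item The inequality $\bs{A}\bs{x}^\star\ge\hat{\bs{b}}=p(\bs{\obs})$ is the left half of the hypothesis.
\item $\bs{x}^\star\ge\bs{0}$ holds because $\bs{x}^\star$ is primal feasible for the true problem.
\item $\bs{A}^\top\bs{y}^\star\le\bs{c}$ and $\bs{y}^\star\ge\bs{0}$ hold because $\bs{y}^\star$ is dual feasible, i.e.\ $\bs{y}^\star\in\set{Y}$.
\end{itemize}
The set $\set{Y}$ does not depend on the right-hand side, so dual feasibility carries over to the predicted problem unchanged.

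Next I squeeze the components. Take any $j\in\set{J}^+(\bs{y}^\star)$. By complementary slackness, $\set{J}^+(\bs{y}^\star)\subseteq\set{J}^=(\bs{x}^\star)$, so $\inner{\bs{a}_j,\bs{x}^\star}=b_j$. The hypothesis read componentwise then gives
\begin{equation*}
b_j=\inner{\bs{a}_j,\bs{x}^\star}\ge\hat b_j\ge b_j,
\end{equation*}
which forces $\hat b_j=b_j$. The ``if'' direction of Proposition~\ref{prop:recoverySstar} then yields $(\bs{x}^\star,\bs{y}^\star)\in\widehat{\set{S}}^\star(\bs{\obs};p)$.

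As an alternative that avoids Proposition~\ref{prop:recoverySstar}, one can verify the equality in \eqref{eq:optimalSolnSet} directly. Strong duality for the true LP gives $\inner{\bs{c},\bs{x}^\star}=\inner{\bs{b},\bs{y}^\star}$. Moreover,
\begin{equation*}
\inner{\hat{\bs{b}}-\bs{b},\bs{y}^\star}=\sum_{j\in\set{J}^+(\bs{y}^\star)} y_j^\star(\hat b_j-b_j)=0,
\end{equation*}
since the terms with $j\notin\set{J}^+(\bs{y}^\star)$ vanish because $y_j^\star=0$. Hence $\inner{\bs{c},\bs{x}^\star}=\inner{\hat{\bs{b}},\bs{y}^\star}$.

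There is no real obstacle here. The only point needing care is the appeal to complementary slackness, which requires $(\bs{x}^\star,\bs{y}^\star)$ to be an optimal primal–dual pair of the true problem; this holds by definition.
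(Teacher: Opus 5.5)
Your proof is correct and follows the paper's argument. On $\set{J}^{+}(\bs{y}^{\star}) \subseteq \set{J}^{=}(\bs{x}^{\star})$ you squeeze $b_j = \inner{\bs{a}_j, \bs{x}^{\star}} \geq \hat{b}_j \geq b_j$, then invoke the ``if'' direction of Proposition~\ref{prop:recoverySstar}, and you also explicitly check its hypothesis $(\bs{x}^{\star}, \bs{y}^{\star}) \in \widehat{\set{S}}(\bs{\obs}; p)$, which the paper's proof leaves implicit.
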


\subsection{Training Problems}
Hereafter, we focus on the class of linear predictors and present the training problems. 
Let us consider  $p(\bs{\xi}) = \bar{\bs{W}} \bs{\xi} + \bar{\bs{z}}$
where $\bar{\bs{W}}$ is a matrix of unknown weights and $\bar{\bs{z}}$ is the intercept of the model. This model can be equivalently written as $p(\bs{\xi}) = \bs{W \xi}$ where $\bs{W}$ is obtained by appending $\bar{\bs{z}}$ to $\bar{\bs{W}}$, i.e., $\bs{W} = [\bar{\bs{z}} \, | \, \bar{\bs{W}}]$, and the scalar $1$ is appended to the input $\bs{\xi}$. For notational convenience, we assume the intercept is implicitly handled by $\bs{W} \in \mathbb{R}^{m \times d}$. Additionally, in practice, $\bs{b}$ may consist of both unknown and determined components. In that case, it is desirable to only estimate the unknown components. While this reduces the dimension of prediction, we retain $p(\bs{\xi}) = \bs{W \xi}$ for simplicity, as this model accommodates such a partial prediction of $\bs{b}$ by some algebraic manipulations. 

When we aim to train a $\bs{W}$ using the dataset $\set{D}_N^\star$, most of the approaches proposed in \S~\ref{sec:diff_approaches} are high-dimensional problems. For example, in \eqref{prob:fixedPrimalTrain}, there are $(md + mN)$ variables in the problem while only $N$ observations are available. Motivated by the high-dimensional statistical learning literature, where the number of unknowns exceeds the number of available data points, we employ functions that are designed to promote sparsity, such as the $L_1$ norm proposed by \cite{tibshirani1996regression}. This leads us to the following training problem:
\begin{equation} \label{prob:fixedPrimalTrainLinear}
    \min_{\bs{W}, (\bs{y}_i)} \bigg \{
    F(\, \bs{W}, (\bs{y}_i)\,) ~\bigg \vert~ \bs{A}\bs{x}_{i}^{\star} \geq \bs{W\obs}_{i},~\bs{A}^\top \bs{y}_i \leq \bs{c},~ \bs{y}_i \geq \bs{0} \quad \forall i \in [N]\bigg \}, 
\end{equation}
where the objective function is defined as 
\begin{equation*}
    F( \, \bs{W}, (\bs{y}_i) \,) = \frac{1}{N} \sum_{i \in [N]} (\inner{\bs{c} , \bs{x}_{i}^{\star}} -  \inner{\bs{W\obs}_i, \bs{y}_i}) + \lambda \, r(\bs{W}) + \gamma \, \phi(\bs{W}).
\end{equation*}
Here, $r(\bullet)$ is a sparsity-inducing regularizer and $\phi(\bullet)$ measures the penalty of violating additional constraints, e.g., $\phi(\bs{W}) \coloneqq \sum_{i \in [N]}\sum_{j \in [m]}\max\{0, b_{ij} - \inner{\bs{w}_j, \bs{\obs}_i}\}$. 
We assume both $r$ and $\phi$ are convex functions, therefore, $F$ is a biconvex function, i.e., $F(\, \bullet,(\bs{y_i}) \,)$ is convex in $\bs{W}$ for a fixed $(\bs{y_i})$, and $F(\, \bs{W}, \bullet \,)$ is convex in $(\bs{y}_i)$ for a fixed $\bs{W}$. Lastly, both $\lambda$ and $\gamma$ are nonnegative weighting parameters. 

We note that the constraints of \eqref{prob:fixedPrimalTrainLinear} are separable in each variable. Since the dual feasible region $\set{Y} = \{\bs{y} \geq \bs{0} ~|~ \bs{A}^\top \bs{y} \leq \bs{c}\}$ is nonempty (this follows from an earlier assumption that for each $\bs{\obs} \in \rvset$, the optimal cost $v^\star$ of the C-LP \eqref{prob:downstream} is finite), we analyze the feasibility of the problem by investigating the first constraint. Proposition~\ref{prop:xi_sign_strict_feas} identifies conditions that guarantee a nonempty feasible set of \eqref{prob:fixedPrimalTrainLinear}. 
\begin{proposition}\label{prop:xi_sign_strict_feas}
Given $\bs{A}$ and $\set{D}_N^\star$, consider a set $\mathcal{W} \coloneqq \{ \bs{W} \in \mathbb{R}^{m \times d} ~\big \vert~ \bs{A}\bs{x}_{i}^{\star} \geq \bs{W\obs}_{i}, \, \forall i \in [N]\}$. The set $\mathcal{W}$ is nonempty if one of the following conditions hold: 
\begin{enumerate}
    \item[(i)] There exists $\tilde{k} \in [d]$ such that $\obs_{i \tilde{k}} > 0$ for all $i \in [N]$;
    \item[(ii)] There exists $\tilde{k} \in [d]$ such that $\obs_{i \tilde{k}} < 0$ for all $i \in [N]$;
    \item[(iii)] For every $k \in [d]$, either $\obs_{ik} \geq 0$ for all $i \in [N]$, or $\obs_{ik} \leq 0$ for all $i \in [N]$. Furthermore, $\bs{\obs}_i \neq \bs{0} \ \forall i \in [N]$.
\end{enumerate}
\end{proposition}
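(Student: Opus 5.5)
The plan is to construct an explicit $\bs{W}$ in each case. It is enough to work row by row: $\bs{W}\in\mathcal{W}$ if and only if, for each $j \in [m]$, the row $\bs{w}_j$ satisfies $\inner{\bs{w}_j, \bs{\obs}_i} \leq \inner{\bs{a}_j, \bs{x}_i^\star}$ for all $i \in [N]$. Let $\beta_j \coloneqq \min_{i \in [N]} \inner{\bs{a}_j, \bs{x}_i^\star}$. This is a finite number because $N$ is finite. Each case then reduces to showing that there is a vector $\bs{w}_j$ with $\inner{\bs{w}_j,\bs{\obs}_i} \leq \beta_j$ for all $i$.

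\emph{Cases (i) and (ii).} We put weight only on coordinate $\tilde{k}$ and set $\bs{w}_j = t_j \bs{e}_{\tilde{k}}$, so that $\inner{\bs{w}_j,\bs{\obs}_i} = t_j \obs_{i\tilde{k}}$.
\begin{itemize}
\item In case (i), all $\obs_{i\tilde k}>0$. Let $\mu \coloneqq \min_i \obs_{i\tilde{k}}>0$ and $M \coloneqq \max_i \obs_{i\tilde k}$. If $\beta_j \geq 0$, then $t_j=0$ works. If $\beta_j<0$, take $t_j = \beta_j/\mu<0$. Then $t_j\obs_{i\tilde k} \leq t_j \mu = \beta_j$, because $t_j<0$ and $\obs_{i\tilde k}\geq\mu$.
\item Case (ii) is symmetric. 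Let $\mu \coloneqq \min_i |\obs_{i\tilde k}|>0$. If $\beta_j<0$, choose $t_j = -\beta_j/\mu > 0$. Then $t_j\obs_{i\tilde k} = -t_j|\obs_{i\tilde k}| \leq -t_j\mu = \beta_j$.
\end{itemize}
An alternative is to observe that replacing $\bs{\obs}$ by $-\bs{\obs}$ and $\bs{W}$ by $-\bs{W}$ maps case (ii) to case (i).

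\emph{Case (iii).} This is the only step that needs care. No single coordinate need be strictly signed across all $i$, so we combine coordinates. For each $k$, define a sign $s_k\in\{-1,+1\}$ with $s_k\obs_{ik}\geq 0$ for all $i$; this is possible by the sign-consistency assumption. Set $\bs{v} \coloneqq (s_1,\dots,s_d)$. Then
\[
\inner{\bs{v},\bs{\obs}_i}=\sum_k |\obs_{ik}| = \|\bs{\obs}_i\|_1 > 0 \quad \text{for all } i,
\]
since each $\bs{\obs}_i\neq\bs{0}$. Let $\mu \coloneqq \min_i \|\bs{\obs}_i\|_1>0$, which is positive because $N$ is finite. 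The scalar $\inner{\bs{v},\bs{\obs}_i}$ now plays the role that $\obs_{i\tilde k}$ played in case (i). Take $\bs{w}_j = t_j\bs{v}$, with $t_j=0$ if $\beta_j\geq0$ and $t_j=\beta_j/\mu$ otherwise. The same inequality chain as in case (i) gives $\inner{\bs{w}_j,\bs{\obs}_i}\leq\beta_j\leq\inner{\bs{a}_j,\bs{x}_i^\star}$.

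Stacking the rows $\bs{w}_j$ gives $\bs{W}\in\mathcal{W}$ in every case. The main obstacle is case (iii): one has to see that the sign pattern yields a single direction $\bs{v}$ with strictly positive inner product against every data point. The nonzero assumption on each $\bs{\obs}_i$ is used exactly there, and a uniform positive lower bound $\mu$ is also needed, which finiteness of $N$ supplies.
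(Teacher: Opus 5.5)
Your proof is correct and is essentially the paper's argument: each row of $\boldsymbol{W}$ is placed on a single direction whose inner product with every $\boldsymbol{\xi}_i$ is strictly signed, then scaled so that it lies below the data. That direction is the coordinate $\tilde k$ in (i)/(ii); in (iii) it is the sign vector, whose inner product $\|\boldsymbol{\xi}_i\|_1$ is exactly the paper's $\sigma_i$.

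The only difference is the scalar. The paper uses the common value $\min_{i',j'}\theta_{i'j'}/\xi_{i'\tilde k}$ (resp.\ $/\sigma_{i'}$) for every row, whereas your split on the sign of $\beta_j$ is cruder but equally valid. It also avoids the paper's intermediate bound by $\min_{i',j'}\theta_{i'j'}$, which need not hold; the paper's final bound $\le \theta_{ij}$ follows directly by taking $i'=i$, $j'=j$.
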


\subsubsection{Alternate Convex Search}
To solve \eqref{prob:fixedPrimalTrainLinear}, we apply a simple approach of iteratively solving for one variable while fixing the other. This approach, referred to as an alternate approach, was proposed by \cite{wendell1976minimization} to minimize a bivariate function subject to separable constraints. 
Algorithm~\ref{alg:acs} presents details of the alternate approach applied to our problem.
\begin{algorithm}
\caption{Alternate Convex Search \label{alg:acs}}
\begin{algorithmic}[1]
\State Parameters: $\lambda, \gamma > 0$;
\State Initialize $\bs{W}^{t}$, $(\bs{y}_i)^t$, and $t=0$;
\While{termination criteria are not satisfied}
\State Given $(\bs{y}_i)^t$, update
\begin{equation} \label{W_problem}
    \bs{W}^{t+1} \in \argmin \limits_{\bs{W}}
    \left\{ 
    \frac{1}{N} \sum \limits_{i \in [N]} (\inner{\bs{c} , \bs{x}_{i}^{\star}} -  \inner{\bs{W\obs}_i, \bs{y}_i^t}) + \lambda \, r(\bs{W}) + \gamma \, \phi(\bs{W})
    ~\bigg \vert~ \bs{A}\bs{x}_{i}^{\star} \geq \bs{W\obs}_{i}, \ \forall i \in [N]
    \right\};
\end{equation}
\State Given $\bs{W}^{t+1}$, update
\begin{equation} \label{y_problem}
    (\bs{y}_i)^{t+1} \in \argmin_{(\bs{y}_i)}
    \left\{ 
    \frac{1}{N} \sum \limits_{i \in [N]} (\inner{\bs{c} , \bs{x}_{i}^{\star}} -  \inner{\bs{W}^{t+1} \bs{\obs}_i, \bs{y}_i}) ~\bigg \vert~ \bs{A}^\top \bs{y}_i \leq \bs{c},~ \bs{y}_i \geq \bs{0} \ \forall i \in [N] \right\};
\end{equation}
\State $t \gets t + 1$;
\EndWhile
\State \Return $(\widehat{\bs{W}}, (\widehat{\bs{y}}_i)) = (\bs{W}^t, (\bs{y}_i)^t)$
\end{algorithmic}
\end{algorithm}

The convergence of the alternate approach has been shown in the literature.
\cite{wendell1976minimization} introduced a stationary solution suitable for bivariate minimization problems, called the partial optimal solution. The convergence property for the case of a biconvex program was formally stated in \cite{gorski2007biconvex}, identifying conditions under which the method yields a partial optimal solution. For a special case of a bilinear program,
\cite{Konno1976-ic} showed that a similar iterative scheme to the alternate approach, shown in \cite[Algorithm~1]{Konno1976-ic}, generates a Karush–Kuhn–Tucker (KKT) point, provided that the constraint sets are bounded. We state the convergence properties of Algorithm~\ref{alg:acs} in Theorem~\ref{thm:conv}.  

\begin{theorem} \label{thm:conv}
    Consider problem \eqref{prob:fixedPrimalTrainLinear}. Assume that $F$ is bounded below, and both $r$ and $\phi$ are convex functions. If the constraints $\mathcal{W} = \{ \, \bs{W} ~\big\vert~ \bs{A}\bs{x}_{i}^{\star} \geq \bs{W\obs}_{i}, \ \forall i \in [N] \, \}$ and $\bs{\mathcal{Y}} = \{ \, (\bs{y}_i) ~\big\vert~ \bs{A}^\top \bs{y}_i \leq \bs{c},~ \bs{y}_i \geq \bs{0}, \ \forall i \in [N] \, \}$ are bounded, 
    then the sequence $\{\bs{W}^{t}, (\bs{y_i})^{t} \}_{t=1}^\infty$ generated by Algorithm~\ref{alg:acs} satisfies
    \begin{enumerate}
        \item[(i)] The sequence $\{F(\bs{W}^{t}, (\bs{y_i})^{t}) \}_{t=1}^\infty$ is monotonically non-increasing; 
        \item[(ii)] \label{thm:part}
        Every accumulation point of  $\{F(\bs{W}^{t}, (\bs{y_i})^{t}) \}_{t=1}^\infty$ is a partial optimal solution, i.e., an accumulation point $(\bs{W}^*, (\bs{y}_i)^*)$ satisfies
        $$F(\bs{W}^*, (\bs{y}_i)^*) \leq F(\bs{W}, (\bs{y}_i)^*) \ \forall \, \bs{W} \in \mathcal{W} \quad \text{ and } \quad 
        F(\bs{W}^*, (\bs{y}_i)^*) \leq F(\bs{W}^*, (\bs{y}_i)) \ \forall \, (\bs{y}_i) \in \bs{\mathcal{Y}};$$
        \item[(iii)] Furthermore, if $r(\bullet)$ and $\phi(\bullet)$ are differentiable, a partial optimal solution of \eqref{prob:fixedPrimalTrainLinear} is equivalent to a KKT point of \eqref{prob:fixedPrimalTrainLinear}.
    \end{enumerate}
\end{theorem}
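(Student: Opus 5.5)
We will prove the three parts in order, following the general biconvex alternate-search theory of \cite{gorski2007biconvex} and \cite{wendell1976minimization}, adapted to the separable structure of \eqref{prob:fixedPrimalTrainLinear}.

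\textbf{Part (i): monotonicity.} By construction of Algorithm~\ref{alg:acs}, $\bs{W}^{t+1}$ minimizes $F(\bullet,(\bs{y}_i)^t)$ over $\mathcal{W}$, so $F(\bs{W}^{t+1},(\bs{y}_i)^t)\le F(\bs{W}^t,(\bs{y}_i)^t)$. Note that $\bs{W}^t\in\mathcal{W}$ for $t\ge1$. Similarly, $(\bs{y}_i)^{t+1}$ minimizes $F(\bs{W}^{t+1},\bullet)$ over $\bs{\mathcal{Y}}$. The terms $\lambda r$ and $\gamma\phi$ do not depend on $(\bs{y}_i)$, so the $y$-subproblem \eqref{y_problem} has the same minimizers as $F(\bs{W}^{t+1},\bullet)$. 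Chaining the two inequalities gives (i).

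We must also check that both subproblems attain their minima. The $y$-subproblem is a bounded LP over the compact polyhedron $\bs{\mathcal{Y}}$. The $W$-subproblem minimizes a convex, hence continuous, function over the closed bounded polyhedron $\mathcal{W}$. Both are nonempty: $\mathcal{W}$ by assumption (or by Proposition~\ref{prop:xi_sign_strict_feas}), and $\bs{\mathcal{Y}}$ because $\set{Y}\neq\emptyset$. Since $F$ is bounded below, the monotone sequence $F^t$ converges to some $F^\infty$.

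\textbf{Part (ii): accumulation points are partial optimal.} Compactness of $\mathcal{W}\times\bs{\mathcal{Y}}$ guarantees accumulation points exist. Take a subsequence $(\bs{W}^{t_k},(\bs{y}_i)^{t_k})\to(\bs{W}^*,(\bs{y}_i)^*)$. By continuity of $F$ (it is bilinear plus convex finite terms), $F(\bs{W}^*,(\bs{y}_i)^*)=F^\infty$.

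For the $y$-inequality: for any $(\bs{y}_i)\in\bs{\mathcal{Y}}$ we have $F(\bs{W}^{t_k},(\bs{y}_i)^{t_k})\le F(\bs{W}^{t_k},(\bs{y}_i))$, since $(\bs{y}_i)^{t_k}$ is optimal for $\bs{W}^{t_k}$. Passing to the limit gives the second inequality.

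The $W$-inequality is the main obstacle. The iterate $\bs{W}^{t_k+1}$ is optimal against $(\bs{y}_i)^{t_k}$, not against $(\bs{y}_i)^{t_k}$ paired with $\bs{W}^{t_k}$. To handle this, pass to a further subsequence so that $\bs{W}^{t_k+1}\to\bar{\bs{W}}$, using compactness again. For every $\bs{W}\in\mathcal{W}$,
\[
F^{t_k+1}\le F(\bs{W}^{t_k+1},(\bs{y}_i)^{t_k})\le F(\bs{W},(\bs{y}_i)^{t_k})\le F^{t_k}.
\]
Taking limits gives $F(\bar{\bs{W}},(\bs{y}_i)^*)=F^\infty\le F(\bs{W},(\bs{y}_i)^*)$ for all $\bs{W}\in\mathcal{W}$. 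Hence $\bar{\bs{W}}$ minimizes $F(\bullet,(\bs{y}_i)^*)$, with value $F^\infty=F(\bs{W}^*,(\bs{y}_i)^*)$. Therefore $\bs{W}^*$ also attains this minimum, which is the first inequality. This squeeze through the common limit $F^\infty$ is exactly what lets us avoid needing uniqueness of the subproblem minimizers.

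\textbf{Part (iii): equivalence with KKT points.} Both constraint sets are polyhedral, so linear constraint qualification holds and KKT conditions are necessary for any local minimizer.

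Suppose $(\bs{W}^*,(\bs{y}_i)^*)$ is partial optimal. Then $\bs{W}^*$ minimizes the convex differentiable function $F(\bullet,(\bs{y}_i)^*)$ over $\mathcal{W}$, which yields multipliers $\bs{\mu}_i\ge\bs{0}$ with
\[
\nabla_{\bs{W}}F+\sum_i\bs{\mu}_i\bs{\obs}_i^\top=\bs{0},
\]
together with complementary slackness. Likewise, $(\bs{y}_i)^*$ solves its LP, which yields multipliers for $\bs{A}^\top\bs{y}_i\le\bs{c}$ and $\bs{y}_i\ge\bs{0}$. Because the constraints are separable, with none coupling $\bs{W}$ and $(\bs{y}_i)$, the union of these two multiplier systems is precisely the KKT system of \eqref{prob:fixedPrimalTrainLinear}.

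Conversely, the KKT system splits into the two blockwise KKT systems. Each block problem is convex, so its KKT conditions are sufficient for blockwise optimality, which is partial optimality.
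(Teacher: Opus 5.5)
Your proof is correct and follows essentially the paper's argument. Part (i) uses blockwise optimality. Part (ii) applies Bolzano--Weierstrass and passes to the limit in $F^{t_k+1}\le F(\bs W,(\bs y_i)^{t_k})$ and in $F(\bs W^{t_k},(\bs y_i)^{t_k})\le F(\bs W^{t_k},(\bs y_i))$. Part (iii) glues the two blockwise KKT systems together using separability of the constraints.

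There are two small corrections to part (ii).
\begin{itemize}
\item In your displayed chain, the last inequality $F(\bs W,(\bs y_i)^{t_k})\le F^{t_k}$ is false for a general $\bs W\in\mathcal W$. What actually holds is $F(\bs W^{t_k+1},(\bs y_i)^{t_k})\le F(\bs W^{t_k},(\bs y_i)^{t_k})=F^{t_k}$, together with $F(\bs W^{t_k+1},(\bs y_i)^{t_k})\le F(\bs W,(\bs y_i)^{t_k})$.
\item The further subsequence $\bar{\bs W}$ is unnecessary. Since $F^{t_k+1}\to F^\infty=F(\bs W^*,(\bs y_i)^*)$, letting $k\to\infty$ in $F^{t_k+1}\le F(\bs W,(\bs y_i)^{t_k})$ gives the first partial-optimality inequality directly. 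This is exactly how the paper does it, and neither route needs uniqueness of the subproblem minimizers.
\end{itemize}

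Your parenthetical appeal to Proposition~\ref{prop:xi_sign_strict_feas} for nonemptiness of $\mathcal W$ does not fit this theorem. Each of that proposition's conditions produces a recession direction of $\mathcal W$. For example, under condition (i), subtracting any $t\ge 0$ from every entry of column $\tilde k$ of $\bs W$ preserves feasibility. So those conditions contradict the boundedness hypothesis, and nonemptiness of $\mathcal W$ has to be assumed outright.

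Your proposal also goes beyond the paper in useful ways. Your part (iii) proves the converse direction, KKT $\Rightarrow$ partial optimal, using convexity of each block problem. The paper's proof omits this direction even though the statement asserts equivalence. You also explicitly check that both subproblem minima are attained, and that linear constraints make KKT necessary. The paper leaves both points implicit.
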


We note that an alternative approach to solving \eqref{prob:fixedPrimalTrainLinear} is to write the objective function as a difference-of-convex (DC) function and apply an algorithm designed for minimizing DC functions. A function $f(\bs{x})$ is called a DC function if there exist two convex functions $g(\bs{x})$ and $h(\bs{x})$ such that $f(\bs{x}) = g(\bs{x}) - h(\bs{x})$. For a DC function, identifying convex functions $g(\cdot)$ and $h(\cdot)$ may not always be possible; however, it turns out that by applying some algebraic work, we obtain a DC representation of \eqref{prob:fixedPrimalTrainLinear}. We present an explicit DC form of the objective in \eqref{prob:fixedPrimalTrainLinear} in Appendix \S\ref{appendix:dc}. Consequently, a numerical method minimizing a DC program, e.g., DC Algorithm in \cite{PhamHoiAn97, 6797753}, can be applied to compute a KKT point of \eqref{prob:fixedPrimalTrainLinear}, as shown in \cite{10.1007/978-3-319-06569-4_2, pang_razaviyayn_alvarado}.

\section{Numerical Experiments}\label{sec:numerical_experiments}
In this section, we report the results of numerical experiments evaluating the performance of the proposed DAL prediction models. For these experiments, we set the hypothesis class to linear prediction models i.e., $\set{P} = \{p~|~\exists \bs{W} \in \mathbb{R}^{m \times d} ~\text{s.t.}~p(\bs{\obs}) = \bs{W\obs}, \forall \bs{\obs} \in \rvset\}$. We conducted experiments on instances of synthetically generated C-LP problems and a network optimization problem. All experiments were conducted on a Windows 11 desktop with an Intel i7-10700 (16 threads) and 64 GB RAM.

For all instances of the two problems, we solve the optimistic-DAL problem \eqref{prob:optimisticTrain} and the primal-DAL problem \eqref{prob:fixedPrimalTrainLinear} with $p(\bs{\obs}) = \bs{W\obs}$. We solve the dual-DAL problem \eqref{prob:fixedDualTrain} with $p(\bs{\obs}) = \alpha\bs{W\obs} - \bs{b}$, where $\alpha \geq 0$ is a hyperparameter that we tune to avoid the trivial solution. We solve two variants of the primal-DAL problem both with a $L_1$ regularizer; the first does not include the constraint violation penalty obtained by setting $\gamma = 0$ in \eqref{prob:fixedPrimalTrainLinear} and the second is a penalized version with $\gamma \neq 0$ and $\phi(\bs{W}) \coloneqq \sum_{i \in [N]}\sum_{j \in [m]}\max\{0, b_{ij} - \inner{\bs{w}_j, \bs{\obs}_i}\}$. We benchmark the DAL problems against learning approaches that do not explicitly consider downstream decisions to predict the relationship between the context and the right-hand-side vectors. We utilize as benchmarks a linear regression (LR) model, a lasso regression model \citep{tibshirani1996regression}, and a random forests (RF) regression model \citep{breiman2001random} with 100 trees and $\lceil\frac{d}{3}\rceil$ features at each split.  
The exact form of the DAL problems, as well as the linear/lasso regression problems, is provided in Appendix \S\ref{app:details_and_additional_results_synthetic_data_experiment} for the synthetic experiment and in Appendix \S\ref{sec:network_optimization_details} for the network optimization experiment.
We solve the primal-DAL problem using the alternate convex search (Algorithm \ref{alg:acs}). This problem can be solved by using a commercial solver, and its DC representation can be tackled using the convex-concave procedure (Algorithm \ref{alg:ccp}), shown in Appendix \S\ref{appendix:dc}. We compare the alternative approaches whose details we present in 
Appendix \S\ref{app:details_and_additional_results_synthetic_data_experiment}. 
Our numerical comparison revealed that Algorithm \ref{alg:acs} is more efficient in solving this problem; hence, we use this solution method from here on out. We solve the optimistic and dual-DAL problems, which are both LPs, using Gurobi 12.0.1. We use the SciKit-Learn package \citep{pedregosa2011scikit} to implement regression-based prediction models.

Recall that the alternative DAL problems aim to minimally recover the true optimal solution as a feasible solution to the predicted problem and optimistically recover it as the optimal solution of the predicted problem. In light of this goal, we evaluate and compare the alternative training problems using the following metrics for a prediction outcome $p(\bs{\obs})$:
$$\text{Feasibility: }\chi\{\bs{Ax}^\star \geq p(\bs{\obs})\}  \qquad \text{{Duality} gap: } \inner{\bs{c}, \bs{x}^\star} - \inner{p(\bs{\obs}), \bs{y}^\star} $$
Here, $\chi\{\cdot\}$ is an indicator function that takes the value 1 if the input is true and 0 otherwise. If we only meet the minimal requirement, then we may not be able to recover the true optimal solution by optimizing the predicted problem. In fact, the optimal solution to the predicted problem ($\hat{\bs{x}}$) may not even be feasible for the true problem. In this case, we may project $\hat{\bs{x}}$ to the true feasible region $\set{X}(\bs{b})$ or the set of true optimal solutions $\set{X}^\star(\bs{b})$. We denote such a solution by $\tilde{\bs{x}}_i \in \arg\min_{\bs{x} \geq \bs{0}}\{||\bs{x} - \hat{\bs{x}}_i||_2^2 ~ | \bs{Ax} \geq \bs{b}_i\}$. We use the projection distance, denoted by $\Pi_\set{X} = ||\hat{\bs{x}}_i - \tilde{\bs{x}}_i||_2$, and the distance of the projected solution to the true optimal solution, denoted by $\Pi_{\set{X}^\star} = ||\tilde{\bs{x}}_i - \bs{x}_i^\star||_2$ to compare solutions from alternative DAL problems.


\subsection{Synthetic Data Experiment}\label{subsec:synthetic_data_experiments}
All instances of the synthetically generated C-LP problem \eqref{prob:downstream} have five decision variables ($n = 5$), seven constraints ($m = 7$), and three contextual features ($d = 3$). {In particular, we fix a cost vector $\bs{c} \in \RR^5$, a constraint matrix $\bs{A} \in \RR^{7 \times 5}$, and a ground truth matrix $\bs{W} \in \RR^{7 \times 3}$.}
We vary the training dataset size $N \in \{250, 500, 750, 1000\}$ and 
conduct $50$ replications, where each replication involves a C-LP instance with {an independently generated training sample $\{(\bs{\obs}_i, \bs{b}_i)\}$ of size $1000$ and validation sample $\{(\bs{\obs}_i^v, \bs{b}_i^v)\}$ of size 250}.
We perform hyperparameter tuning separately for each {training dataset size $N$}. For more details regarding the data-generation process and hyperparameter tuning, we refer the reader to Appendix \S\ref{app:details_and_additional_results_synthetic_data_experiment}.

In Table \ref{tab:percent_feas_true_soln_synthetic}, we report the results regarding the feasibility metric for the prediction models. We present these results as the percentage of the validation dataset where the true optimal solution $\bs{x}_i^\star$ resides in the feasible region of the predicted problem associated with the right-hand side generated using a specific training model.
\begin{table}[b!]
\footnotesize
\centering \renewcommand{\arraystretch}{1.2}
\begin{tabular}{cccccccc}
$N$ & Optimistic-DAL & Primal-DAL & Primal-DAL (w/ penalty) & Dual-DAL & {LR} & {Lasso} & {RF} \\
\hline
    250 & 93.09 & 96.77 & 94.50 & 44.00 & 24.92 & 26.58 & 24.13 \\
    500 & 96.56 & 98.00 & 97.24 & 46.19 & 24.82 & 26.40 & 24.14 \\
    750 & 97.68 & 98.66 & 98.10 & 46.74 & 25.11 & 26.52 & 24.33 \\
    1000 & 98.24 & 98.90 & 98.59  & 46.68 & 25.05 & 26.44 & 24.27 \\
\hline
\end{tabular}
\caption{Percentage of true solutions $\bs{x}_i^\star$ in the validation dataset which are in the predicted feasible regions.}
\label{tab:percent_feas_true_soln_synthetic}
\end{table}
The results indicate that the optimistic and primal-DAL problems recover a very high percentage $(> 90 \%)$ of the true solutions $\bs{x}_i^\star$ in their feasible regions. This is due to the fact that their models explicitly contain constraints $\bs{A}\bs{x}_i^\star \geq \bs{W\obs}_i$ for all $i \in [N]$. The inclusion of the penalty term {in the primal-DAL model} does not aid the solution quality with respect to the overall percentage, as evident from the third and fourth columns. With the feasibility percentage ranging only as high as {$46.74\%$}, the performance of the dual-DAL model deteriorates relative to the former two models, while still outperforming the regression models, which have very low true solution recovery percentages of around {$25\%$}. Moreover, as the size of the training dataset increases, the feasibility percentage also improves in most cases. From these results, we can conclude that including the effect of downstream decision-making in the learning process, as we do through constraints and objectives in the training optimization model, improves the feasibility metric. {In the companion Figure \ref{fig:true_soln_num_pred_constrs_satisfied}, we show the number of predicted constraints satisfied by the true solutions $\bs{x}_i^\star$ on one replication of the experiment with $N = 250$. The results show that a majority of the true solutions satisfy all seven constraints when the right-hand side values are predicted by the optimistic- and primal-DAL models (this corresponds to the $> 90\%$ shown in columns 2-4 in Table \ref{tab:percent_feas_true_soln_synthetic}). There are a few outliers that are infeasible for these models, and even then, at most two of the seven constraints are violated. On the other hand, when predictions are generated by the dual-DAL or regression models, most instances have at least one violated constraint, rendering $\bs{x}_i^\star$ infeasible (this elucidates the percentages reported in columns 5-8 for these models in Table \ref{tab:percent_feas_true_soln_synthetic}).}
\begin{figure}[t!]
    \centering
    \includegraphics[scale = 0.5]{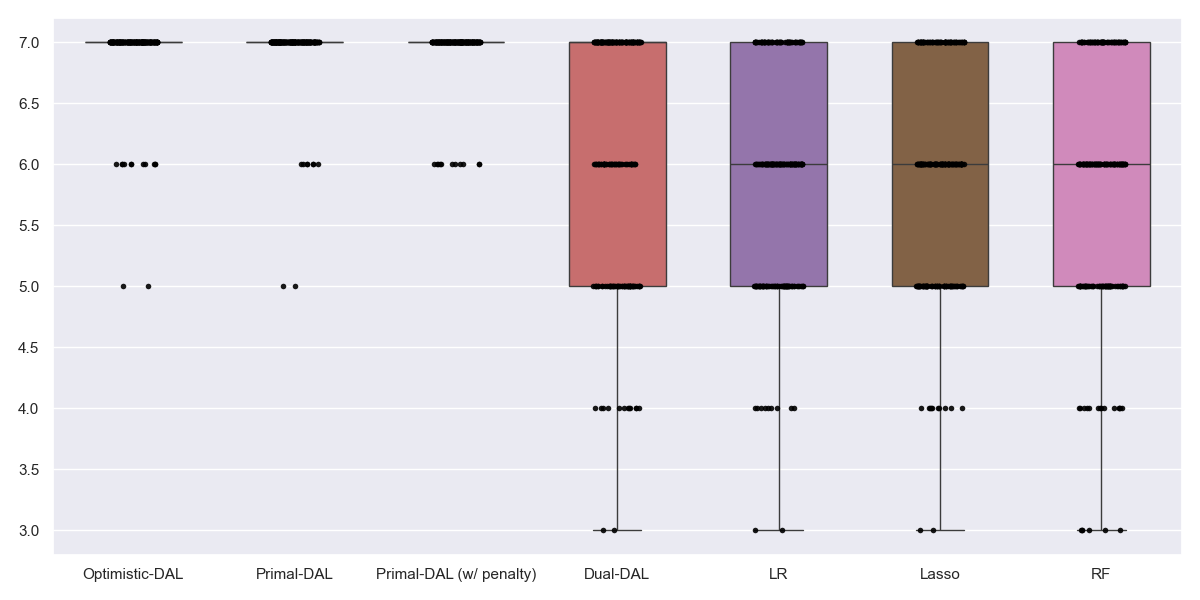}
    \caption{Number of predicted constraints satisfied by the true solutions $\bs{x}_i^\star$ {(result of one replication when training the models on $N = 250$ datapoints)}.}
    \label{fig:true_soln_num_pred_constrs_satisfied}
\end{figure}

Table \ref{tab:pred_duality_gap} shows the median {duality} gap of the predicted problem over all {validation} datapoints such that $\bs{x}_i^\star$ is in the predicted feasible region (the associated feasibility percentages from Table \ref{tab:percent_feas_true_soln_synthetic} are provided in parentheses).
\begin{table}[b!]
\footnotesize
\centering \renewcommand{\arraystretch}{1.2}
\resizebox{0.99\textwidth}{!}{\begin{tabular}{cccccccc}
$N$ & Optimistic-DAL & Primal-DAL & Primal-DAL (w/penalty) & Dual-DAL & {LR} & {Lasso} & {RF} \\
\hline
    250 & 5.88 (93.09\%) & 5.84 (96.77\%) & 5.74 (94.50\%) & 1.17 (44.00\%) & 0.47 (24.92\%) & 0.53 (26.58\%) & 0.70 (24.13\%) \\
    500 & 8.74 (96.56\%) & 8.62 (98.00\%) & 8.61 (97.24\%) & 1.20 (46.19\%) & 0.48 (24.82\%) & 0.53 (26.40\%) & 0.63 (24.14\%) \\
    750 & 10.31 (97.68\%) & 10.20 (98.66\%) & 10.25 (98.10\%) & 1.24 (46.74\%) & 0.48 (25.11\%) & 0.53 (26.52\%) & 0.60 (24.33\%) \\
    1000 & 11.69 (98.24\%) & 11.58 (98.90\%) & 11.57 (98.59\%) & 1.23 (46.68\%) & 0.48 (25.05\%) & 0.54 (26.44\%) & 0.60 (24.27\%) \\
\hline
\end{tabular}}
\caption{{Duality} gap of the true pair $(\bs{x}_i^\star, \bs{y}_i^\star)$ relative to the predicted problem.}
\label{tab:pred_duality_gap}
\end{table}
It is worthwhile to note that as the size of the training dataset ($N$) increases, the performance of the models that explicitly maintain feasibility across all data points (viz., optimistic- and primal-DAL in columns 2--4) deteriorates significantly with respect to the {duality}-gap metric. While the percentage of feasible points is lower in dual-DAL, among the datapoints where $\bs{x}_i^\star$ is feasible, the solution pair $(\bs{x}_i^\star, \bs{y}_i^\star)$ exhibits a lower {duality} gap for the predicted problem. {The percentage of feasible points is lowest for the regression models, and they exhibit the lowest duality gap among the datapoints where $\bs{x}_i^\star$ is feasible.}

\begin{table}[t!]
\small
\centering \renewcommand{\arraystretch}{1.2}
\resizebox{0.99\textwidth}{!}{\begin{tabular}{c|cc|cc|cc|cc|cc|cc|cc}
 & \multicolumn{2}{c|}{Optimistic-DAL} & \multicolumn{2}{c|}{Primal-DAL} & \multicolumn{2}{c|}{Primal-DAL (w/ penalty)} & \multicolumn{2}{c|}{Dual-DAL} & \multicolumn{2}{c|}{{LR}} & \multicolumn{2}{c|}{{Lasso}} & \multicolumn{2}{c}{{RF}} \\
$N$ & $\Pi_{\set{X}}$ & $\Pi_{\set{X}^\star}$ & $\Pi_{\set{X}}$ & $\Pi_{\set{X}^\star}$ & $\Pi_{\set{X}}$ & $\Pi_{\set{X}^\star}$ & $\Pi_{\set{X}}$ & $\Pi_{\set{X}^\star}$ & $\Pi_{\set{X}}$ & $\Pi_{\set{X}^\star}$ & $\Pi_{\set{X}}$ & $\Pi_{\set{X}^\star}$ & $\Pi_{\set{X}}$ & $\Pi_{\set{X}^\star}$ \\
\hline
250 & 1.81 & 1.47 & 1.86 & 1.50 & 1.80 & 1.50 & 0.28 & 0.28 & 0.08 & 0.21 & 0.08 & 0.21 & 0.10 & 0.27 \\
500 & 2.96 & 2.23 & 2.98 & 2.23 & 2.96 & 2.23 & 0.29 & 0.29 & 0.08 & 0.21 & 0.08 & 0.21 & 0.09 & 0.25 \\
750 & 3.59 & 2.78 & 3.61 & 2.74 & 3.59 & 2.79 & 0.31 & 0.29 & 0.08 & 0.21 & 0.08 & 0.21 & 0.09 & 0.24 \\
1000 & 4.25 & 3.36 & 4.25 & 3.37 & 4.25 & 3.41 & 0.31 & 0.29 & 0.08 & 0.21 & 0.08 & 0.21 & 0.09 & 0.24 \\
\hline
\end{tabular}}
\caption{Projection distances $\Pi_{\set{X}}$ and $\Pi_{\set{X}^\star}$.}
\label{tab:synthetic_proj}
\end{table}

While the DAL models reliably recover the true optimal solution in the predicted feasible region, as indicated by the results in Table \ref{tab:percent_feas_true_soln_synthetic}, we do not have a suitable approach to identify the true optimal solution $\bs{x}_i^\star$. In our final experiment, we investigate using the optimal solution to the predicted problem, $\hat{\bs{x}}$, as a proxy for the true optimal solution.
Table \ref{tab:synthetic_proj} displays the median projection distances $\Pi_{\set{X}}$ and $\Pi_{\set{X}^\star}$.
{For this experiment, each model was always able to generate a feasible and bounded C-LP \eqref{prob:downstream} (however, the solution $\hat{\bs{x}}$ generated by our models is seldom feasible to the true problem -- see Table \ref{tab:percent_feas_pred_soln_synthetic} in Appendix \S\ref{app:details_and_additional_results_synthetic_data_experiment}).}
{Among our proposed models,} the predicted problems associated with dual-DAL generate solutions that are closest to the true feasible region and to optimal solutions. It is also worth noticing that as the size of the training set $N$ increases, the predicted optimal solution obtained either from the optimistic- or primal-DAL models lies further away from the true feasible region. {On the other hand, the projection metrics remain relatively unaffected for the Dual-DAL and the regression models as $N$ increases. In fact, we observe no difference in the median projection distances for the linear regression and lasso regression models measured at two significant figures past the decimal. Upon further investigation, we saw that for the linear and lasso regression models, the model coefficients changed as $N$ increased, however, the change lead to minimal differences in the values of the predictions $\hat{\bs{b}}$ and in the values of the resulting downstream solutions $\hat{\bs{x}}$.}


\subsection{Network Optimization Problem}
\begin{table}[b!]
\footnotesize
\centering \renewcommand{\arraystretch}{1.2}
\resizebox{0.99\textwidth}{!}{\begin{tabular}{cccccccc} 
Metric & Optimistic-DAL & Primal-DAL & Primal-DAL (w/ penalty) & Dual-DAL & {LR} & {Lasso} & {RF} \\ 
\hline
{Duality} Gap & 57792.02 & 57808.67 & 57750.25 & 53324.8 & 49585.67 & 49585.29 & 58185.82 \\ \\
\hline
$\Pi_{\set{X}}$ & 6959.42 & 6967.04 & 6952.95 & 4802.99 & 644.88 & 644.89 & 584.26 \\
\hline
$\Pi_{\set{X}^\star}$ & 21728.62 & 21735.02 & 21719.97 & 24466.48 & 9976.28 & 9975.50 & 10582.88 \\
\hline
\end{tabular}}
\caption{Performance of models on the network optimization problem \eqref{prob:network} {(the duality gap is computed for validation datapoints such that $\bs{x}_i^\star$ is in the corresponding predicted feasible region, and the projection distances $\Pi_{\set{X}}$ and $\Pi_{\set{X}^\star}$ are computed over all validation datapoints)}}
\label{tab:network_optimization_summary}
\end{table}
We consider a minimum-cost network flow problem involving a set of source, transhipment, and destination nodes. In addition to the shipment costs, to ensure that the optimization problem remains feasible with variations in parameters, we introduce a penalty cost for unmet demand at the destination nodes. In this problem, demand is uncertain and depends on a context vector comprising local average daily temperature, day of the week, and month. The optimization problem contains 75 decision variables and 24 constraints. Of the constraints, five have right-hand side components that correspond to the contextual vector. We refer the reader to Appendix \S\ref{sec:network_optimization_details} for a detailed presentation of the optimization model, contextual features, and hyperparameter tuning.

For our experiments on the network optimization problem, we utilize a real-world dataset to draw independent samples for each replication. We use approximately 75\% of the sampled data for training and the remaining 25\% for validation. Our experiments reveal that the predicted feasible region obtained using the optimistic-DAL, primal-DAL, and penalized primal-DAL models contains the true optimal solution in $84.35\%$, $84.25\%$, and {$84.22\%$} of the validation instances, respectively. Compared to the synthetic problem, the feasibility metric was much lower at {$1.01\%$} for dual-DAL. Finally, the linear, lasso, and random-forest regression models have feasibility metric values of $13.69\%$, $13.69\%$, and $11.72\%$, respectively. The median number of predicted constraints that a true solution $\bs{x}_i^\star$ satisfies is five (out of the possible five) for the optimistic and primal-DAL models, and only two out of five for all other models. These results provide further evidence of the value of decision-aware prediction models. 

Table \ref{tab:network_optimization_summary} shows the results pertaining to the {duality} gap and projection distances for the network optimization problem. {Note that the duality gap metric is computed only over those validation datapoints where $\bs{x}_i^\star$ is in the predicted feasible region (see the above feasibility percentages), as in the synthetic experiment. On the other hand, the projection distances $\Pi_{\set{X}}$ and $\Pi_{\set{X}^\star}$ are computed for each validation datapoint where the corresponding prediction model generates a feasible and bounded C-LP \eqref{prob:downstream} (which happened to be all $100\%$ of the datapoints for every prediction model).} As in the synthetic problem, the performance of the optimistic- and primal-DAL models {as well as the regression models} is similar. However, unlike the synthetic problem, the dual-DAL model performs relatively worse on the {duality} gap and projection distance metrics, as seen in the {fifth} column of the table. This behavior, along with the low value of the feasibility metric, is attributed to setting the hyperparameter $\alpha = 2$ rather than tuning it. 


\section{Conclusions}
In this paper, we propose alternative formulations for training a model to predict the right-hand side of an LP using a correlated contextual vector. Using observed primal and dual optimal solutions of the LP, our formulations aim to increase the feasibility of the predicted problem with respect to the true optimal solution while minimizing its duality gap. We analyze properties of the training problems, identify conditions under which the resulting prediction model recovers the mentioned feasibility and optimality, and present suitable solution methods to solve each problem. The proposed methods are validated through numerical experiments on synthetic and network optimization problems. The results show that the prediction models trained using the proposed formulation achieve much higher feasibility, compared to standard regression approaches, for the unseen (validation) dataset. The results also indicate that as the number of training data points increases, the feasibility of the model enhances at the cost of the {duality} gap.

\subsubsection*{Notes}

This second submission has some notable changes from the first submission. In particular, in the synthetic data experiment in \S\ref{subsec:synthetic_data_experiments}, we now fix the cost vector $\bs{c}$, the constraint matrix $\bs{A}$ and ground truth model $\bs{W}$ so that they are the same for every replication. Additionally, we fixed a bug that was present in the computation of the duality gap metric (formerly called the optimality gap metric) in Table \ref{tab:network_optimization_summary} for the network optimization problem.

\subsubsection*{Acceptance Status}

This paper has been accepted in the 2026 INFORMS Optimization Society Refereed Proceedings.

\subsubsection*{Funding Acknowledgment}

All three authors’ work was supported by the National Science Foundation under grant CCF-2423246. The first and the third author also acknowledge the Office of Naval Research Grant \#N00014-22-1-2603 for partially supporting this work.

\bibliographystyle{plain}
\bibliography{master}

@article{bengio1997using,
  title={Using a financial training criterion rather than a prediction criterion},
  author={Bengio, Yoshua},
  journal={International journal of neural systems},
  volume={8},
  number={04},
  pages={433--443},
  year={1997},
  publisher={World Scientific}
}

@article{elmachtoub2022smart,
  title={Smart “predict, then optimize”},
  author={Elmachtoub, Adam N and Grigas, Paul},
  journal={Management Science},
  volume={68},
  number={1},
  pages={9--26},
  year={2022},
  publisher={INFORMS}
}

@inproceedings{mandi2020smart,
  title={Smart predict-and-optimize for hard combinatorial optimization problems},
  author={Mandi, Jayanta and Stuckey, Peter J and Guns, Tias and others},
  booktitle={Proceedings of the AAAI conference on artificial intelligence},
  volume={34},
  number={02},
  pages={1603--1610},
  year={2020}
}

@inproceedings{demirovic2020dynamic,
  title={Dynamic Programming for Predict+ Optimise.},
  author={Demirovic, Emir and Stuckey, Peter J and Guns, Tias and Bailey, James and Leckie, Christopher and Ramamohanarao, Kotagiri and Chan, Jeffrey and others},
  booktitle={AAAI},
  pages={1444--1451},
  year={2020}
}

@article{estes2023smart,
  title={Smart predict-then-optimize for two-stage linear programs with side information},
  author={Estes, Alexander S and Richard, Jean-Philippe P},
  journal={INFORMS Journal on Optimization},
  volume={5},
  number={3},
  pages={295--320},
  year={2023},
  publisher={INFORMS}
}

@article{hu2023two,
  title={Two-stage predict+ optimize for MILPs with unknown parameters in constraints},
  author={Hu, Xinyi and Lee, Jasper and Lee, Jimmy},
  journal={Advances in Neural Information Processing Systems},
  volume={36},
  pages={14247--14272},
  year={2023}
}

@article{sadana2024survey,
  title={A survey of contextual optimization methods for decision-making under uncertainty},
  author={Sadana, Utsav and Chenreddy, Abhilash and Delage, Erick and Forel, Alexandre and Frejinger, Emma and Vidal, Thibaut},
  journal={European Journal of Operational Research},
  year={2024},
  publisher={Elsevier}
}

@article{tibshirani1996regression,
  title={Regression shrinkage and selection via the lasso},
  author={Tibshirani, Robert},
  journal={Journal of the Royal Statistical Society Series B: Statistical Methodology},
  volume={58},
  number={1},
  pages={267--288},
  year={1996},
  publisher={Oxford University Press}
}

@article{breiman2001random,
  title={Random forests},
  author={Breiman, Leo},
  journal={Machine learning},
  volume={45},
  pages={5--32},
  year={2001},
  publisher={Springer}
}

@article{PhamHoiAn97,
  year = {1997},
  volume = {22},
  number = {1},
  pages = {289--355},
  author = {Tao {Pham Dinh} and Hoai An {Le Thi}},
  title = {Convex analysis approach to {D.C.} programming: Theory, algorithms and applications},
  journal = {ACTA Mathematica Vietnamica}
}

@ARTICLE{6797753,
  author={Sriperumbudur, Bharath K. and Lanckriet, Gert R. G.},
  journal={Neural Computation}, 
  title={A Proof of Convergence of the Concave-Convex Procedure Using Zangwill's Theory}, 
  year={2012},
  volume={24},
  number={6},
  pages={1391-1407},
  doi={10.1162/NECO_a_00283}}

@InProceedings{10.1007/978-3-319-06569-4_2,
author="Le Thi, Hoai An
and Huynh, Van Ngai
and Dinh, Tao Pham",
editor="van Do, Tien
and Thi, Hoai An Le
and Nguyen, Ngoc Thanh",
title="DC Programming and DCA for General DC Programs",
booktitle="Advanced Computational Methods for Knowledge Engineering",
year="2014",
publisher="Springer International Publishing",
address="Cham",
pages="15--35",
isbn="978-3-319-06569-4"
}

@article{pang_razaviyayn_alvarado,
author = {Pang, Jong-Shi and Razaviyayn, Meisam and Alvarado, Alberth},
title = {Computing B-Stationary Points of Nonsmooth DC Programs},
journal = {Mathematics of Operations Research},
volume = {42},
number = {1},
pages = {95-118},
year = {2017},
doi = {10.1287/moor.2016.0795}
}

@article{yuille2003concave,
  title={The concave-convex procedure},
  author={Yuille, Alan L and Rangarajan, Anand},
  journal={Neural computation},
  volume={15},
  number={4},
  pages={915--936},
  year={2003},
  publisher={MIT Press}
}

@article{lipp2016variations,
  title={Variations and extension of the convex--concave procedure},
  author={Lipp, Thomas and Boyd, Stephen},
  journal={Optimization and Engineering},
  volume={17},
  pages={263--287},
  year={2016},
  publisher={Springer}
}

@article{gorski2007biconvex,
  title={Biconvex sets and optimization with biconvex functions: a survey and extensions},
  author={Gorski, Jochen and Pfeuffer, Frank and Klamroth, Kathrin},
  journal={Mathematical methods of operations research},
  volume={66},
  number={3},
  pages={373--407},
  year={2007},
  publisher={Springer}
}

@article{wendell1976minimization,
  title={Minimization of a non-separable objective function subject to disjoint constraints},
  author={Wendell, Richard E and Hurter Jr, Arthur P},
  journal={Operations Research},
  volume={24},
  number={4},
  pages={643--657},
  year={1976},
  publisher={INFORMS}
}

@ARTICLE{Konno1976-ic,
  title     = "A cutting plane algorithm for solving bilinear programs",
  author    = "Konno, Hiroshi",
  journal   = "Math. Program.",
  publisher = "Springer Science and Business Media LLC",
  volume    =  11,
  number    =  1,
  pages     = "14--27",
  month     =  dec,
  year      =  1976
}

@book{boyd2004convex,
  title={Convex optimization},
  author={Boyd, Stephen and Vandenberghe, Lieven},
  year={2004},
  publisher={Cambridge university press}
}

@book{bishop2006pattern,
  title={Pattern recognition and machine learning},
  author={Bishop, Christopher M and Nasrabadi, Nasser M},
  volume={4},
  number={4},
  year={2006},
  publisher={Springer}
}

@misc{erickson_county_city_driving_2014,
  author = {Erickson, Jeff},
  title = {County/City Driving Distance Dataset: Driving distances for each county centroid to the nearest large city in the contiguous United States},
  year = {2014},
  url = {https://github.com/jefferickson/county-city-driving-dist},
  note = {Accessed: Novemver 4, 2025},
}

@article{pedregosa2011scikit,
  title={Scikit-learn: Machine learning in Python},
  author={Pedregosa, Fabian and Varoquaux, Ga{\"e}l and Gramfort, Alexandre and Michel, Vincent and Thirion, Bertrand and Grisel, Olivier and Blondel, Mathieu and Prettenhofer, Peter and Weiss, Ron and Dubourg, Vincent and others},
  journal={the Journal of machine Learning research},
  volume={12},
  pages={2825--2830},
  year={2011},
  publisher={JMLR. org}
}




\appendix

\titleformat{\section}
  {\normalfont\Large\bfseries}
  {Appendix~\Alph{section}:}
  {1em}
  {}
\section{Proofs of the Results} \label{sec:appendix_proof}
This section includes the proofs of all the results that appear in the paper. 
\proof{Proof of Proposition~\ref{prop:recoveryS}:} 
$(i)$ Observe that $\bs{b} \geq  \hat{\bs{b}}$ implies $\{ \bs{x} \geq \bs{0} ~ | ~ \bs{A} \bs{x} \geq \bs{b} \} \subseteq \{ \bs{x} \geq \bs{0} ~ | ~ \bs{A} \bs{x} \geq \hat{\bs{b}} \}$. Therefore, we must have $(\bs{x}^{\star}, \bs{y}^{\star}) \in \widehat{\set{S}}(\bs{\obs}; p)$. $(ii)$ Since $j' \in \set{J}^{=}(\bs{x}^{\star})$, we have $\inner{\bs{a}_{j'}, \bs{x}^{\star}} = b_{j'} < \hat{b}_{j'}$. This completes the proof. 
\endproof

\proof{Proof of Proposition~\ref{prop:recoverySstar}:} $(\implies)$ Since $\widehat{S}^\star(\bs{\obs}; p) \subseteq \widehat{S}(\bs{\obs}; p)$ and by part $(ii)$ of Proposition \eqref{prop:recoveryS}, we have $\hat{b}_j \leq b_j$ for all $j \in \set{J}^+(\bs{y}^\star) \subseteq \set{J}^=(\bs{x}^\star)$. Moreover,
\begin{equation*}
    \inner{\hat{\bs{b}}, \bs{y}^{\star}} = \inner{\bs{c}, \bs{x}^{\star}} = \inner{\bs{b}, \bs{y}^{\star}},
\end{equation*}
where the last equality follows by strong duality of $(\bs{x}^{\star}, \bs{y}^{\star})$. Hence, $\sum_{j \in \set{J}^+(\bs{y}^\star)}(\hat{b}_j - b_j)y_j^\star = 0$, implying that $\hat{b}_j = b_j$ for all $j \in \set{J}^+(\bs{y}^\star)$. $(\impliedby)$ Applying the definition of $\set{J}^{+}(\bs{y}^{\star})$ to the condition of the proposition yields
\begin{equation*}
    \inner{\hat{\bs{b}}, \bs{y}^{\star}}
    = \displaystyle{\sum \limits_{j \in \set{J}^{+}(\bs{y}^{\star})}} \, \hat{b}_j \, y^{\star}_j 
    = \displaystyle{\sum \limits_{j \in \set{J}^{+}(\bs{y}^{\star})}} \, b_j \, y^{\star}_j
    =\inner{\bs{b}, \bs{y}^{\star}} 
    = \inner{\bs{c}, \bs{x}^{\star}},
\end{equation*}
where the last equality is followed by the strong duality of $(\bs{x}^{\star}, \bs{y}^{\star})$. Therefore, $(\bs{x}^{\star}, \bs{y}^{\star}) \in \widehat{\set{S}}^{\star}(\bs{\obs}; p)$.
\endproof

\proof{Proof of Corollary~\ref{cor:squeezing}:} By definition, $\inner{\bs{a}_{j}, \bs{x}^{\star}} = b_{j}$ for all $j \in \set{J}^{=}(\bs{x}^{\star})$, which also holds for all $j \in \set{J}^{+}(\bs{y}^{\star})$. Applying the condition of corollary yields $b_j = \hat{b}_j$ for any $j \in \set{J}^{+}(\bs{y}^{\star})$. By Proposition~\ref{prop:recoverySstar}, we have $(\bs{x}^{\star}, \bs{y}^{\star}) \in \widehat{\set{S}}^{\star}(\bs{\obs}; p)$.
\endproof

\proof{Proof of Proposition~\ref{prop:xi_sign_strict_feas}:} Consider an arbitrary $j \in [m]$. 
Let us denote the $j$-th component of $\bs{A} \bs{x}_i^\star$ as $\theta_{ij}$.  The corresponding $j$-th constraint of $\bs{A}\bs{x}_{i}^{\star} \geq \bs{W\obs}_{i}, \, \forall i \in [N],$ can be viewed as an intersection of hyperplanes $\cap_{i \in [N]} \{ \bs{w} \in \mathbb{R}^d ~\big \vert~ \theta_{ij} \geq \langle \bs{w}, \bs{\obs}_i \rangle \}$. 

To show $(i)$, we construct a feasible $\tilde{\bs{w}} \in \mathbb{R}^d$ by setting 
\begin{equation}\label{prop:feasibility}
\tilde{w}_{k} = 
\begin{cases}
    \min \limits_{i' \in [N], \, j' \in [m]} \, \left\{ \, \displaystyle{\frac{\theta_{i' j'}}{\obs_{i' k}}} \, \right\}, & \text{ if } k = \tilde{k}  \\
    \hspace{1.2pc} 0, & \text{ otherwise.}  
\end{cases}
\end{equation}
With strict positivity of $\obs_{i \mkern1mu \tilde{k}}$, the above then yields, 
\begin{align*}
    \langle \tilde{\bs{w}}, \bs{\obs}_i \rangle
    = 
    \min \limits_{i' \in [N], \, j' \in [m]} \left\{ \, \frac{\theta_{i' j'}}{\obs_{i' \mkern1mu \tilde{k}}} \, \right\} \, \obs_{i \mkern1mu \tilde{k}} 
    \, \leq \,
    \frac{\min \limits_{i' \in [N], j' \in [m]} \{ \, \theta_{i' j'} \, \} }{\obs_{i \mkern1mu \tilde{k}}} \, \obs_{i \mkern1mu \tilde{k}}
    \, \leq \,
    \theta_{ij} \text{ for any } i \in [N].
\end{align*}
By applying \eqref{prop:feasibility} to every row of $\bs{W}$, we show that $\mathcal{W}$ is nonempty. The proof for part $(ii)$ is identical except that we assign $\max \limits_{i' \in [N], \, j' \in [m]} \, \left\{ \, \displaystyle{\frac{\theta_{i' j'}}{\obs_{i' k}}} \, \right\}$ to $\tilde{w}_k$ if $k = \tilde{k}$, and $0$ otherwise. 

To show $(iii)$, define $\set{K}^+ \coloneqq \{ k \in [d] ~\vert ~ \obs_{ik} \geq 0, \ \forall i \in [N] \}$ and $\set{K}^- \coloneqq \{ k \in [d] ~\vert ~ \obs_{ik} \leq 0, \ \forall i \in [N] \}$ such that $\set{K}^+ \cap \set{K}^- = \emptyset$. Let $\sigma_i \coloneqq \Big( \sum \limits_{k \in \set{K}^+} \obs_{ik} - \sum \limits_{k \in \set{K}^-} \obs_{ik} \Big) > 0$. Construct $\tilde{w}$ such that, 
\begin{equation*}
\tilde{w}_{k} = 
\begin{cases}
    \min \limits_{i' \in [N], \, j' \in [m]} \, \left\{ \, \displaystyle{\frac{\theta_{i' j'}}{\sigma_{i'}}} \, \right\}, & \text{ if } k \in \set{K}^+  \\
    - \min \limits_{i' \in [N], \, j' \in [m]} \, \left\{ \, \displaystyle{\frac{\theta_{i' j'}}{\sigma_{i'}}} \, \right\}, & \text{ if } k \in \set{K}^-. 
\end{cases}
\end{equation*}
Consequently, we have
\begin{align*}
    \langle \tilde{\bs{w}}, \bs{\obs}_i \rangle
    &=
    \sum \limits_{k \in \set{K}^+} \tilde{w}_k \, \obs_{ik} + \sum \limits_{k \in \set{K}^-} \tilde{w}_k \, \obs_{ik} \\
    &=
    \sum \limits_{k \in \set{K}^+} \min \limits_{i' \in [N], \, j' \in [m]} \, \left\{ \, \displaystyle{\frac{\theta_{i' j'}}{\sigma_{i'}}} \, \right\} \, \obs_{ik} + \sum \limits_{k \in \set{K}^-} \min \limits_{i' \in [N], \, j' \in [m]} \, \left\{ \, \displaystyle{\frac{\theta_{i' j'}}{\sigma_{i'}}} \, \right\} \, | \, \obs_{ik} \, | \\
    &\leq 
    \min \limits_{i' \in [N], \, j' \in [m]} \, \left\{ \, \theta_{i' j'} \, \right\} \,
    \frac{1}{\sigma_{i}} \Big( \, \underbrace{\sum \limits_{k \in \set{K}^+} \obs_{ik} + \sum \limits_{k \in \set{K}^-} | \, \obs_{ik} \, | }_{= \, \sigma_i} \Big) \\
    &\leq
    \theta_{ij} \text{, for any $i \in [N]$.}
\end{align*}
This concludes the proof.
\endproof


\proof{Proof of Theorem \ref{thm:conv}:} 
We will prove Theorem \ref{thm:conv} using a biconvex program with separable constraints:
    \begin{equation} \label{prob:biconvex_separable}
        \min \limits_{\bs{x}, \bs{y}} \ \left\{ f(\bs{x}, \bs{y}) ~\bigg \vert~ \bs{x} \in X, \, \bs{y} \in Y \right\},
    \end{equation}
where $f:X \times Y \rightarrow \mathbb{R}$ is a biconvex function. We assume $X \coloneqq \{ \bs{x} ~ | ~ g_i(\bs{x}) \leq 0, ~ \forall i \in \set{I}\}$ for some index set $\set{I}$, where $g_i$ are differentiable and that $Y \coloneqq \{\bs{y} ~ | ~ h_j(\bs{y}) \leq 0, ~ \forall j \in \set{J}\}$ for some index set $\set{J}$, where $h_j$ are differentiable. A partial optimal solution of the problem is defined below.
\begin{definition}
    A point $(\bs{x}^*, \bs{y}^*)$ is a partial optimal solution of \eqref{prob:biconvex_separable} if it satisfies
        \begin{equation*}
        f(\bs{x}^*, \bs{y}^*) \leq f(\bs{x}, \bs{y}^*) \ \forall x \in X \quad \text{ and } \quad f(\bs{x}^*, \bs{y}^*) \leq f(\bs{x}^*, \bs{y}) \ \forall y \in Y.
    \end{equation*}
\end{definition}
Suppose we apply Alternate Convex Search (ACS) in \cite{gorski2007biconvex} to solve the problem. The steps of ACS are described below. Given $t=0$ and an initial $(\bs{x}^t, \bs{y}^t)$, sequentially update
    \begin{align} 
        \bs{x}^{t+1} \in \argmin \limits_{\bs{x}} \left\{ f (\bs{x}, \bs{y}^t) ~\bigg \vert~ \bs{x} \in X \right\}, \quad \label{acs:x}  \\ 
        \bs{y}^{t+1} \in \argmin \limits_{\bs{y}} \left\{ f (\bs{x}^{t+1}, \bs{y}) ~\bigg \vert~ \bs{y} \in Y \right\}, \label{acs:y}
    \end{align}
and $t \leftarrow t+1$ until the stopping criteria are satisfied. If $f$ is a biconvex function that is bounded below, and $X$ and $Y$ are compact sets, then 
\begin{enumerate}
    \item[($i$)] The sequence $\{ f(\bs{x}^t, \bs{y}^t) \}_{t=1}^\infty$ is monotonically non-increasing;
    \item[($ii$)] Every accumulation point of  $\{ (\bs{x}^t, \bs{y}^t) \}_{t=1}^\infty$ is a partial optimal solution. 
    \item[($iii$)] Furthermore, if $f$ is differentiable, a partial optimal solution of \eqref{prob:biconvex_separable} is a KKT point of \eqref{prob:biconvex_separable}.
\end{enumerate}
\begin{proof}{Proof of Theorem \ref{thm:conv}:}

($i$) It is not difficult to see that $f(\bs{x}^t, \bs{y}^t) \geq f(\bs{x}^{t+1}, \bs{y}^{t+1})$ for all $t$ by the optimality of \eqref{acs:x} and \eqref{acs:y}. 

($ii$) By Bolzano-Weierstrass theorem, $\{(\bs{x}^t, \bs{y}^t)\}_{t=1}^\infty$ has a convergent subsequence, denoted by $(\bs{x}^{t_j}, \bs{y}^{t_j}) \rightarrow (\bs{x}^*, \bs{y}^*)$ as $t_j \rightarrow \infty$. For any $t_j$, we have $f(\bs{x}^{{t_j} + 1}, \bs{y}^{{t_j} + 1}) \leq f(\bs{x}, \bs{y}^{t_j})$ for all $x \in X$ and $f(\bs{x}^{{t_j} + 1}, \bs{y}^{{t_j} + 1}) \leq f(\bs{x}^{t_j}, \bs{y})$ for all $y \in Y$ by \eqref{acs:x} and \eqref{acs:y}. By part $(i)$ and taking the limit, the former inequality yields $f(\bs{x}^*, \bs{y}^*) = \lim \limits_{t_j \rightarrow \infty} f(\bs{x}^{{t_j}}, \bs{y}^{{t_j}}) = \lim \limits_{t_j \rightarrow \infty} f(\bs{x}^{{t_j} + 1}, \bs{y}^{{t_j} + 1}) \leq f(\bs{x}, \bs{y}^*)$ for all $x \in X$. Likewise, $f(\bs{x}^*, \bs{y}^*) \leq f(\bs{x}^*, \bs{y})$ for all $y \in Y$, which shows $(\bs{x}^*, \bs{y}^*)$ is a partial optimal solution.

($iii$) Let $(\bs{x}^*, \bs{y}^*)$ be a partial optimum of \eqref{prob:biconvex_separable}, i.e., $f(\bs{x}^*, \bs{y}^*) = \min_{\bs{x}}\{f(\bs{x}, \bs{y}^*) ~ | ~ g_i(\bs{x}) \leq 0, ~ \forall i \in \set{I}\}$ and $f(\bs{x}^*, \bs{y}^*) = \min_{\bs{y}}\{f(\bs{x}^*, \bs{y}) ~ | ~ h_{j}(\bs{y}) \leq 0, ~ \forall j \in \set{J}\}$. The point $\bs{x}^*$ is a global minimizer for the former optimization problem, hence it is a KKT point for this problem \citep{boyd2004convex}. Thus, there is some $\bs{\lambda}^* \in \RR_+^{|\set{I}|}$ such that $g_i(\bs{x}^*) \leq 0$ and $\lambda_i^* g_i(\bs{x}^*) = 0$ for all $i \in \set{I}$, and $\nabla_{\bs{x}}f(\bs{x}^*, \bs{y}^*) + \sum_{i \in \set{I}}\lambda_i^*\nabla_{\bs{x}}g_i(\bs{x}^*) = \bs{0}$. Using the same logic, we have that there is some $\bs{\mu}^* \in \RR_+^{|\set{J}|}$ such that $h_{j}(\bs{y}^*) \leq 0$ and $\mu_{j}^* h_{j}(\bs{y}^*) = 0$ for all $j \in \set{J}$, and $\nabla_{\bs{y}}f(\bs{x}^*, \bs{y}^*) + \sum_{j \in \set{J}}\mu_{j}^*\nabla_{\bs{y}}h_{j}(\bs{y}^*) = \bs{0}$. The union of the primal feasibility conditions for the $\bs{x}$ and $\bs{y}$-subproblems is the same as the primal feasibility KKT condition for a point $(\bs{x}^*, \bs{y}^*)$ in the original separable biconvex program \eqref{prob:biconvex_separable} (this is also true for the dual feasibility as well as the complementary slackness conditions). That is, the primal/dual feasibility conditions as well as the complementary slackness condition hold for the point $(\bs{x}^*, \bs{y}^*)$ in \eqref{prob:biconvex_separable} with dual vectors $(\bs{\lambda}^*, \bs{\mu}^*)$. Since
\begin{equation*}
    \nabla_{\bs{x}}f(\bs{x}^*, \bs{y}^*) + \sum_{i \in \set{I}}\lambda_i^*\nabla_{\bs{x}}g_i(\bs{x}^*) = 0 \qquad \text{and} \qquad \nabla_{\bs{y}}f(\bs{x}^*, \bs{y}^*) + \sum_{j \in \set{J}}\mu_{j}^*\nabla_{\bs{y}}h_{j}(\bs{y}^*) = 0.
\end{equation*}
then
\begin{equation*}
    \nabla f(\bs{x}^*, \bs{y}^*) + \sum_{i \in \set{I}}\lambda_i^*\nabla g_i(\bs{x}^*) + \sum_{j \in \set{J}}\mu_i^*\nabla h_{j}(\bs{y}^*) = 0.
\end{equation*}
This concludes the proof.
\end{proof}

\section{A Difference-of-convex Representation of \eqref{prob:fixedPrimalTrainLinear}} \label{appendix:dc}
By applying some algebraic techniques, 
we identify a DC representation of the problem \eqref{prob:fixedPrimalTrainLinear}. Observe that for each $i \in [N]$,
\begin{align*}
    \inner{\bs{W\obs}_i, \bs{y}_i} &= \sum_{j \in [m]}(\bs{W\obs}_i)_{j}y_{ij} \\
    &= \sum_{j \in [m]}\inner{\bs{w}_{j}, \bs{\obs}_i}y_{ij} \\
    &= \sum_{j \in [m]}\sum_{k \in [d]}\obs_{ik}W_{jk}y_{ij} \\
    &= \sum_{j \in [m]}\sum_{k \in [d]}\obs_{ik}\left(\frac{1}{2}(W_{jk} + y_{ij})^{2} - \frac{1}{2}(W_{jk}^{2} + y_{ij}^{2})\right).
\end{align*}
Then the objective function in \eqref{prob:fixedPrimalTrainLinear} can be written as
\begin{align*}
    \frac{1}{N}&\sum_{i \in [N]}(\inner{\bs{c}, \bs{x}_i^{\star}} - \inner{\bs{W\obs}_i, \bs{y}_i})  + \lambda \, r(\bs{W}) + \gamma \, f(\bs{W}) \\
    &= \frac{1}{N}\sum_{i \in [N]}\left(\inner{\bs{c}, \bs{x}_i^{\star}} - \sum_{j \in [m]}\sum_{k \in [d]}\obs_{ik}\left(\frac{1}{2}(W_{jk} + y_{ij})^{2} - \frac{1}{2}(W_{jk}^{2} + y_{ij}^{2})\right)\right) + \lambda \, r(\bs{W}) + \gamma \, f(\bs{W}) \\
    &= \small{\underbrace{\frac{1}{N}\sum_{i \in [N]}\left(\inner{\bs{c}, \bs{x}_i^{\star}} + \frac{1}{2}\sum_{j \in [m]}\left[\sum_{k \in K^{+}(i)}\obs_{ik}(W_{jk}^{2} + y_{ij}^{2}) - \sum_{k \in K^{-}(i)}\obs_{ik}(W_{jk} + y_{ij})^{2}\right]\right) + \lambda \, r(\bs{W}) + \gamma \, f(\bs{W})}_{F_{1}(\bs{W}, (\bs{y}_i))}} \\
    & \quad - \underbrace{\frac{1}{2N}\sum_{i \in [N]}\sum_{j \in [m]}\left(\sum_{k \in K^{+}(i)}\obs_{ik}\left(W_{jk} + y_{ij}\right)^{2} - \sum_{k \in K^{-}(i)}\obs_{ik}\left(W_{jk}^{2} + y_{ij}^{2}\right)\right)}_{F_{2}(\bs{W}, (\bs{y}_i))},
\end{align*}
where $K^{-}(i) \coloneqq \{k \in [d] ~ | ~ \obs_{ik} < 0\}$ and $K^{+}(i) \coloneqq \{k \in [d] ~ | ~ \obs_{ik} > 0\}$. This shows that $F_{1}$ and $F_{2}$ are convex functions. We can solve problem \eqref{prob:fixedPrimalTrainLinear} with this DC representation $F_1 - F_2$ of the objective function using the Convex-Concave Procedure (CCP) of \cite{yuille2003concave}. The basic idea is that at each iteration, we solve a convexified version of problem \eqref{prob:fixedPrimalTrainLinear} consisting of $F_1$ and the first-order approximation of the function $F_2$. This requires the computation of the gradient $\nabla F_2$. It is easy to see that
\begin{equation}
    \nabla_{W_{jk}}F_2 = \frac{1}{N}\sum_{i \in [N]}\left(\obs_{ik}(W_{jk} + y_{ij})\right)
\end{equation}
if $k \in K^+(i)$ and
\begin{equation}
    \nabla_{W_{jk}}F_2 = -\frac{1}{N}\sum_{i \in [N]}\obs_{ik}W_{jk}
\end{equation}
if $k \in K^-(i)$. Alternatively,
\begin{equation}
    \nabla_{y_{ij}}F_2 = \frac{1}{N}\left(y_{ij}\sum_{k \in [d]}|\obs_{ik}| + \sum_{k \in K^+(i)}\obs_{ik}W_{jk}\right).
\end{equation}
To write the algorithm, we perform a change of variables 
\begin{equation*}
    {\bs{u} = (w_{11}, \ldots, w_{1d}, \ldots, w_{m_1}, \ldots, w_{md}, y_{11}, \ldots, y_{1m}, \ldots, y_{N1}, \ldots, y_{Nm})}
\end{equation*}
and denote by $\bs{u}_{l_1 : l_2}$ the subvector $(u_{l_1}, \ldots, u_{l_2})$. Following \cite[Algorithm~1.1]{lipp2016variations}, we present Algorithm \ref{alg:ccp}.
\begin{algorithm}
\caption{Convex-Concave Procedure \label{alg:ccp}}
\begin{algorithmic}[1]
\State Parameters: $\lambda, \gamma > 0$;
\State Initialize $\bs{W}^{t}$, $(\bs{y}_i)^t$, and $t=0$;
\While{termination criteria are not satisfied}
\State Solve the convexified subproblem
\begin{align}
    \min_{\bs{u}} \quad & F_1(\bs{u}) - \left(F_2(\bs{u}^t) + \inner{\nabla F_2(\bs{u}^t), \bs{u} - \bs{u}^t}\right) \nonumber \\
    \text{s.t.} \quad & \inner{\bs{a}_j, \bs{x}_i^\star} \geq \inner{\bs{u}_{j : j + d}, \bs{\obs}_i}, ~ \forall i \in [N], ~ \forall j \in [m], \notag \\
    & \bs{A}^\top \bs{u}_{(d + i - 1)m + 1 : (d + i)m} \leq \bs{c}, ~ \forall i \in [N], \notag \\
    & \bs{u}_{(d + i - 1)m + 1 : (d + i)m} \geq \bs{0}, ~ \forall i \in [N].
    \end{align}
\State $t \gets t + 1$;
\EndWhile
\State \Return $(\widehat{\bs{W}}, (\hat{\bs{y}}_i)) = (\bs{W}^t, (\bs{y}_i)^t)$
\end{algorithmic}
\end{algorithm}

\section{Details and Additional Results for Synthetic Data Experiment}\label{app:details_and_additional_results_synthetic_data_experiment}

\textit{Data generation}: We generate a cost vector $\bs{c} \in \RR^n$ and a constraint matrix $\bs{A} \in \RR^{m \times n}$ with components $\bs{c}_l, \bs{A}_{jl} \overset{\text{i.i.d.}}{\sim} \set{U}[-10, 10], l \in [n], j \in [m]$. We generate a ground truth linear model $\bs{W}^\star \in \RR^{m \times d}$ with components $W_{jk}^\star \overset{\text{i.i.d.}}{\sim} \text{Bernoulli}(0.5), j \in [m], k \in [d],$ as in \citep{elmachtoub2022smart}. We generate $\bs{\obs}_{i} \in \RR^d$ wtih components $\bs{\obs}_{ik} \overset{\text{i.i.d.}}{\sim} \set{U}[-10, 10], i \in [N], k \in [d],$ and update $\obs_{i1} \gets \obs_{i1} + 10.1$ to ensure feasibility of the optimistic and primal-DAL problems. Finally, we compute $b_{ij} = \frac{1}{\sqrt{d}}\inner{\bs{w}_j^\star, \bs{\obs}_i} + \epsilon_{ij}, i \in [N], j \in [m]$, where $\epsilon_{ij} \overset{\text{i.i.d.}}{\sim} \set{N}(0, 1)$.

\textit{Comparison methods}: We solve the optimistic-DAL problem \eqref{prob:optimisticTrain} under the hypothesis class of linear models, i.e.,
\begin{align}\label{prob:optimisticTrainLinear}
    \min_{\bs{W}}~& \bigg \{\frac{1}{N} \sum_{i \in [N]} (\inner{\bs{c} , \bs{x}_{i}^{\star}} -  \inner{\bs{W\obs}_i, \bs{y}_{i}^{\star}}) ~\bigg \vert~ \bs{A}\bs{x}_{i}^{\star} \geq \bs{W\obs}_{i} \quad \forall i \in [N]\bigg \}.
\end{align}
We solve the primal-DAL problem \eqref{prob:fixedPrimalTrainLinear} using the convex regularizer $r(\bs{W}) \coloneqq \sum_{j \in [m]}\sum_{k \in [d]}|W_{jk}|$ and the convex penalty function $\phi(\bs{W}) \coloneqq \sum_{i \in [N]}\sum_{j \in [m]}\max\{0, b_{ij} - \inner{\bs{w}_j, \bs{\obs}_i}\}$, which penalizes violation of the overpredictive constraints $\bs{W\obs}_i \geq \bs{b}_i$. Note that the convex functions $r(\bullet)$ and $\phi(\bullet)$ that we choose are a digression from Theorem \ref{thm:conv} as they are nondifferentiable. We present an approximation of the dual-DAL problem \eqref{prob:fixedDualTrain} by applying the method presented in \citep{elmachtoub2022smart}, which introduced a loss function to train a predictor for the case $\hat{\bs{c}} := p(\bs{\obs})$, i.e., the contextual vector is linked to the cost vector of C-LP \eqref{prob:downstream}. We apply the derivation given in the reference to the dual of \eqref{prob:downstream} and obtain the following problem:
\begin{align} \label{prob:fixedDualModTrain}
    \min_{\bs{W}, (\bs{x}_i)}~& \bigg \{\frac{1}{N} \sum_{i \in [N]} (\inner{\bs{c} , \bs{x}_i} -  \inner{\alpha\bs{W\obs}_{i} - \bs{b}_{i}, \bs{y}_{i}^{\star}}) ~\bigg \vert~ \bs{A}\bs{x}_{i} \geq \alpha \bs{W\obs}_i - \bs{b}_i, ~\bs{x}_i \geq \bs{0} \quad \forall i \in [N]\bigg \},
\end{align}
where $\alpha \geq 0$ is a given parameter (see Appendix \S\ref{sec:deriving_fixed_dual_mod_train} for full details of the derivation). Setting $\bs{b}_i = (\alpha - 1) \bs{W} \bs{\obs}_i$, the problem \eqref{prob:fixedDualModTrain} recovers \eqref{prob:fixedDualTrain}.

Regarding the machine learning models, we solve the linear regression problem
\begin{equation} \label{prob:linearRegression}
\min_{\bs{W} \in \mathbb{R}^{m \times d}} \quad ||\mathfrak{X} W^\top - \mathfrak{B}||_F^2,
\end{equation}
where
$\mathfrak{X} = \begin{bmatrix}
\bs{\obs}_{1}^\top\\
\vdots\\
\bs{\obs}_{N}^\top
\end{bmatrix} \in \RR^{N \times d}$,
$\mathfrak{B} = \begin{bmatrix}
\bs{b}_{1}^\top\\
\vdots\\
\bs{b}_{N}^\top
\end{bmatrix} \in \RR^{N \times m}$, and $||\cdot||_F$ denotes the Frobenius norm. We solve the lasso regression problem
\begin{equation}
\label{prob:lassoRegression}
\min_{\bs{W} \in \mathbb{R}^{m \times d}} \quad ||\mathfrak{X} W^\top - \mathfrak{B}||_F^2 + \alpha_{lasso}\sum_{j \in [m]}\sum_{k \in [d]}|W_{jk}|,
\end{equation}
where $\mathfrak{X}$ and $\mathfrak{B}$ are as above and $\alpha_{lasso} \geq 0$ is a hyperparameter.

\textit{Sensitivity Analysis}: We perform a sensitivity analysis for the primal-DAL problem \eqref{prob:fixedPrimalTrainLinear} and the dual-DAL problem \eqref{prob:fixedDualModTrain} as we vary their hyperparameters $(\lambda, \gamma)$ and $\alpha$, respectively. Throughout, we set $N = 1000$ and display the results of 50 replications.

To start, we set $\gamma = 0$ for the primal-DAL problem \eqref{prob:fixedPrimalTrainLinear}. Figure 
{\ref{fig:primal_dal_gamma_0_W_hat_sparsity}} shows the number of zero components in the solution $\widehat{\bs{W}}$ obtained from of Algorithm \ref{alg:acs} as the regularization parameter $\lambda$ varies (recall that $\widehat{\bs{W}}$ has {21} components for the synthetic experiments).
\begin{figure}[h!]
    \centering
    \includegraphics[scale = 0.5]{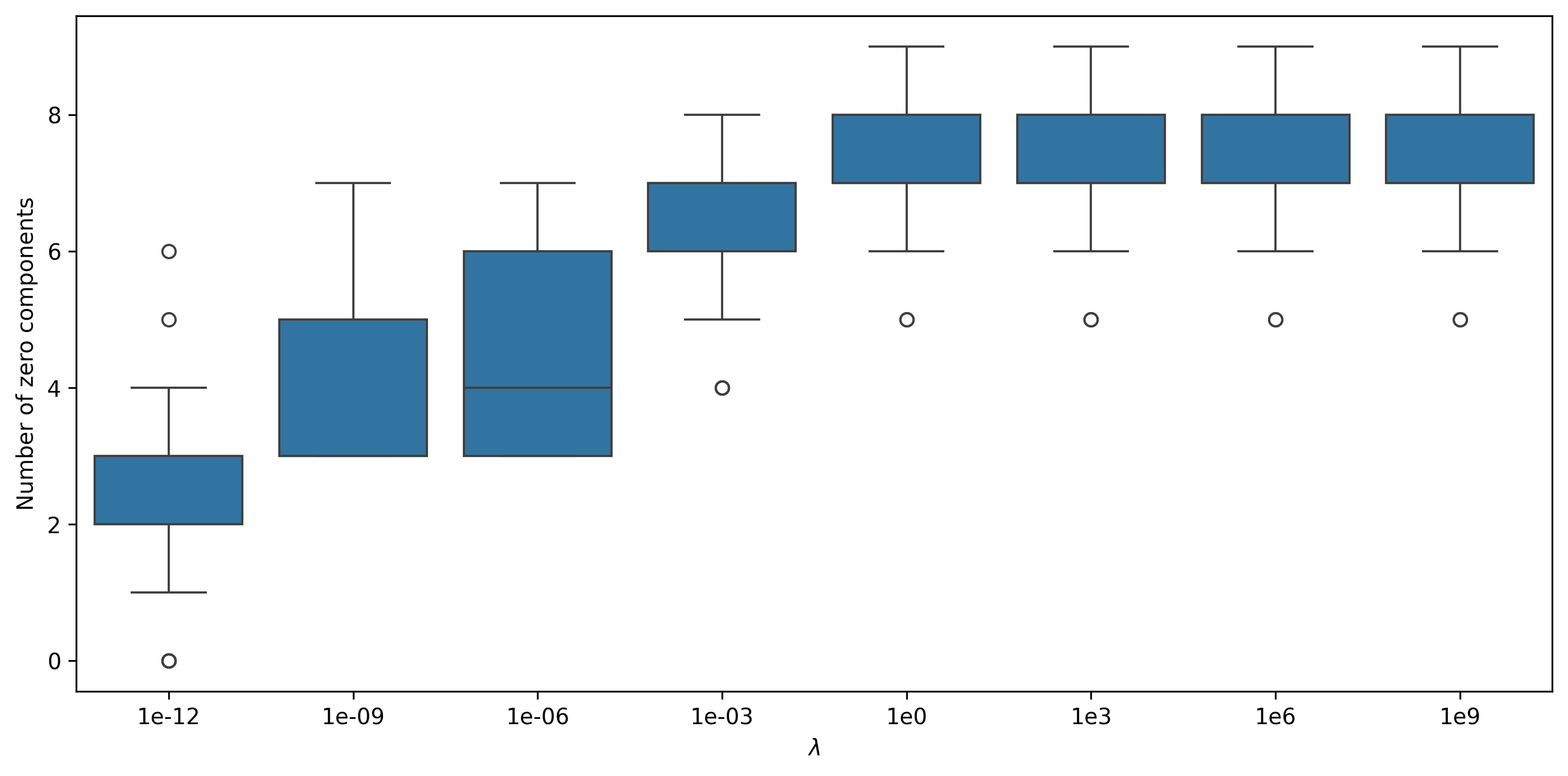}
    \caption{Sparsity of the solution $\widehat{\bs{W}}$ from of Algorithm \ref{alg:acs} when $\gamma = 0$.}
    \label{fig:primal_dal_gamma_0_W_hat_sparsity}
\end{figure}
Obviously, as $\lambda$ increases, so do the number of zero components in the model $\widehat{\bs{W}}$. We also observe the affect of the regularization parameter $\lambda$ on the average in-sample optimality gap $\frac{1}{N}\sum_{i \in [N]}(\inner{\bs{c}, \bs{x}_i^\star} - \inner{\widehat{\bs{W}}\bs{\obs}_i, \hat{\bs{y}}_i})$ as well as the value $r(\widehat{\bs{W}}) = \sum_{j \in [m]}\sum_{k \in [d]}|\widehat{W}_{jk}|$ using the solution $(\widehat{\bs{W}}, (\hat{\bs{y}}_i))$ obtained from Algorithm \ref{alg:acs}. The results are shown in Figure
{\ref{fig:primal_dal_gamma_0_component_function_vals}.}
\begin{figure}[h!]
    \centering
    \includegraphics[scale = 0.5]{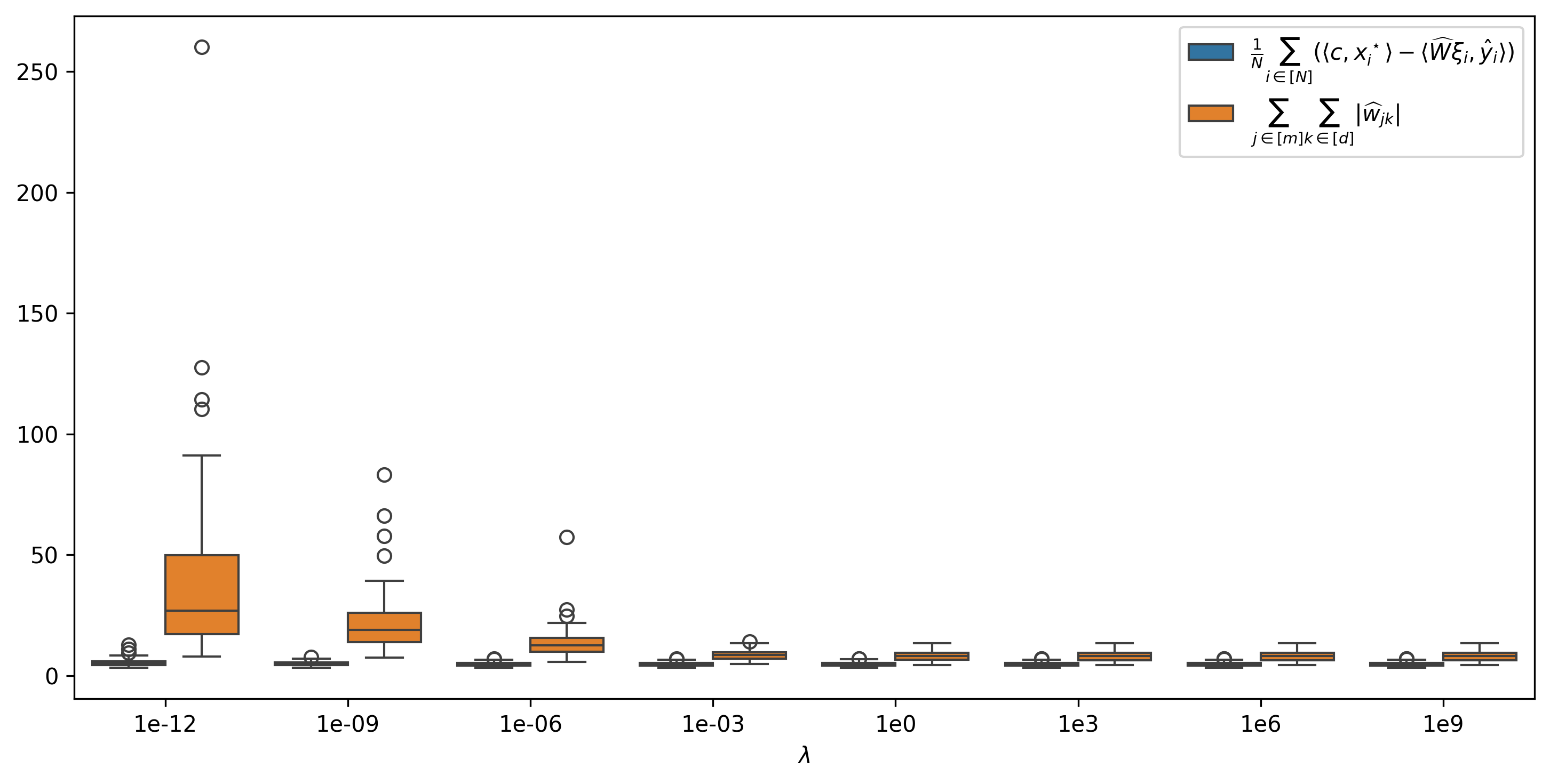}
    \caption{Component function values in primal-DAL problem \eqref{prob:fixedPrimalTrainLinear} evaluated using the solution of Algorithm \ref{alg:acs} ($\gamma = 0$).}
    \label{fig:primal_dal_gamma_0_component_function_vals}
\end{figure}
For $\lambda \in [10^{-12}, 10^{0}]$, we see that 
the regularization level 
decreases {whereas the average in-sample optimality gap remains relatively unaffected}, suggesting that regularization is effective in improving model quality. As $\lambda$ increases further, we see that 
there is minimal impact on the regularization level $r(\widehat{\bs{W}})$.

We also investigate the affect of jointly varying $(\lambda, \gamma)$ in the primal-DAL problem \eqref{prob:fixedPrimalTrainLinear} on the average in-sample optimality gap $\frac{1}{N}\sum_{i \in [N]}(\inner{\bs{c}, \bs{x}_i^\star} - \inner{\widehat{\bs{W}}\bs{\obs}_i, \hat{\bs{y}}_i})$, the value $r(\widehat{\bs{W}}) = \sum_{j \in [m]}\sum_{k \in [d]}|\widehat{W}_{jk}|$, and the value $\phi(\widehat{\bs{W}}) = \sum_{i \in [N]}\sum_{j \in [m]}\max\{0, b_{ij} - \inner{\bs{w}_j, \bs{\obs}_i}\}$ of the penalty function using the solution $(\widehat{\bs{W}}, (\hat{\bs{y}}_i))$ obtained from Algorithm \ref{alg:acs}. We plot the median of each of these values in Figure
{\ref{fig:primal_dal_gamma_pos}}.
\begin{figure}[h!]
  \centering
  \begin{subfigure}[b]{0.32\textwidth}
    \includegraphics[width=\linewidth]{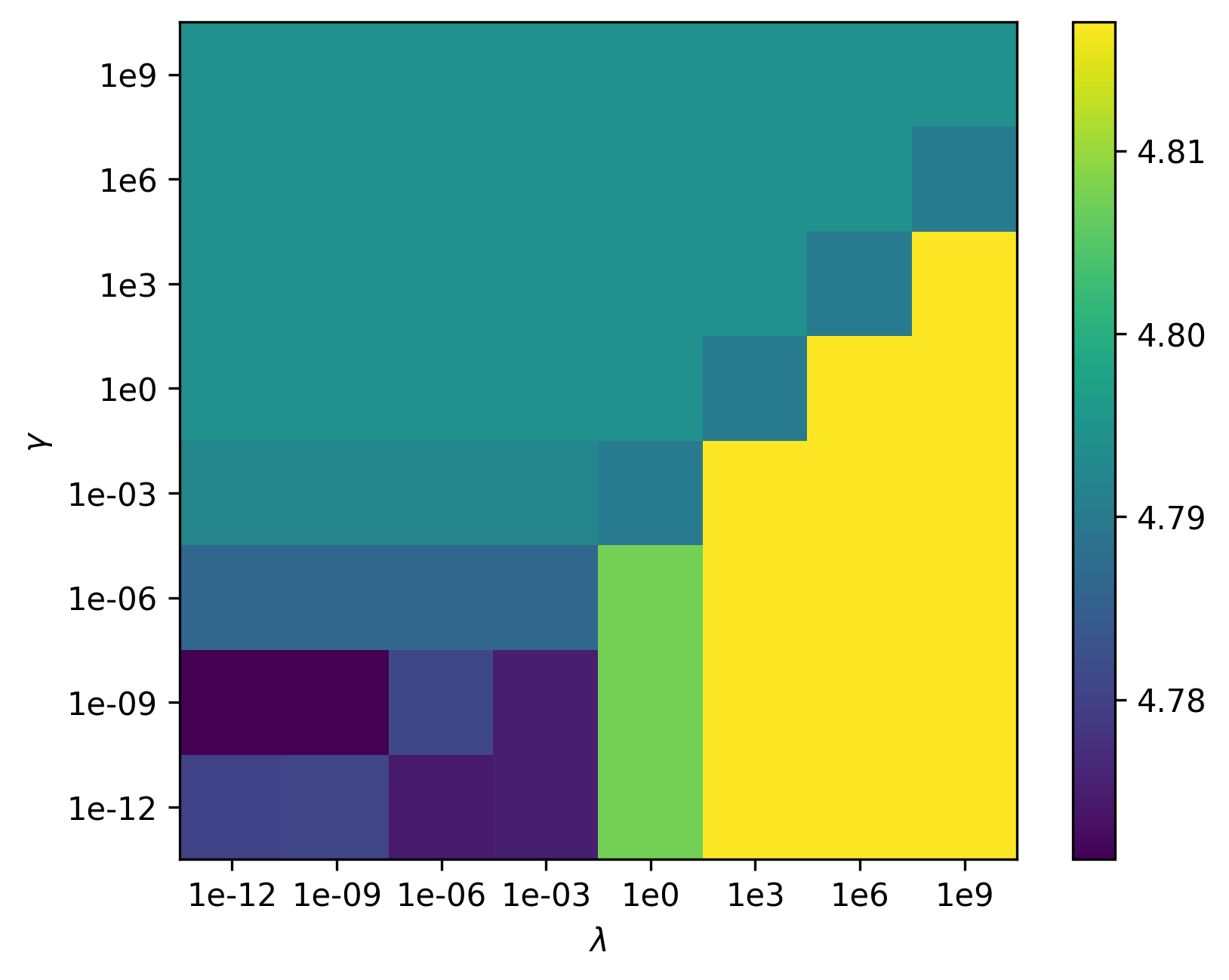}
    \caption{\scriptsize{$\frac{1}{N}\sum_{i \in [N]}(\inner{\bs{c}, \bs{x}_i^\star} - \inner{\widehat{\bs{W}}\bs{\obs}_i, \hat{\bs{y}}_i})$}}
    \label{fig:primal_dal_gamma_pos_obj_val_opt_gap}
  \end{subfigure}
  \hfill
  \begin{subfigure}[b]{0.32\textwidth}
    \includegraphics[width=\linewidth]{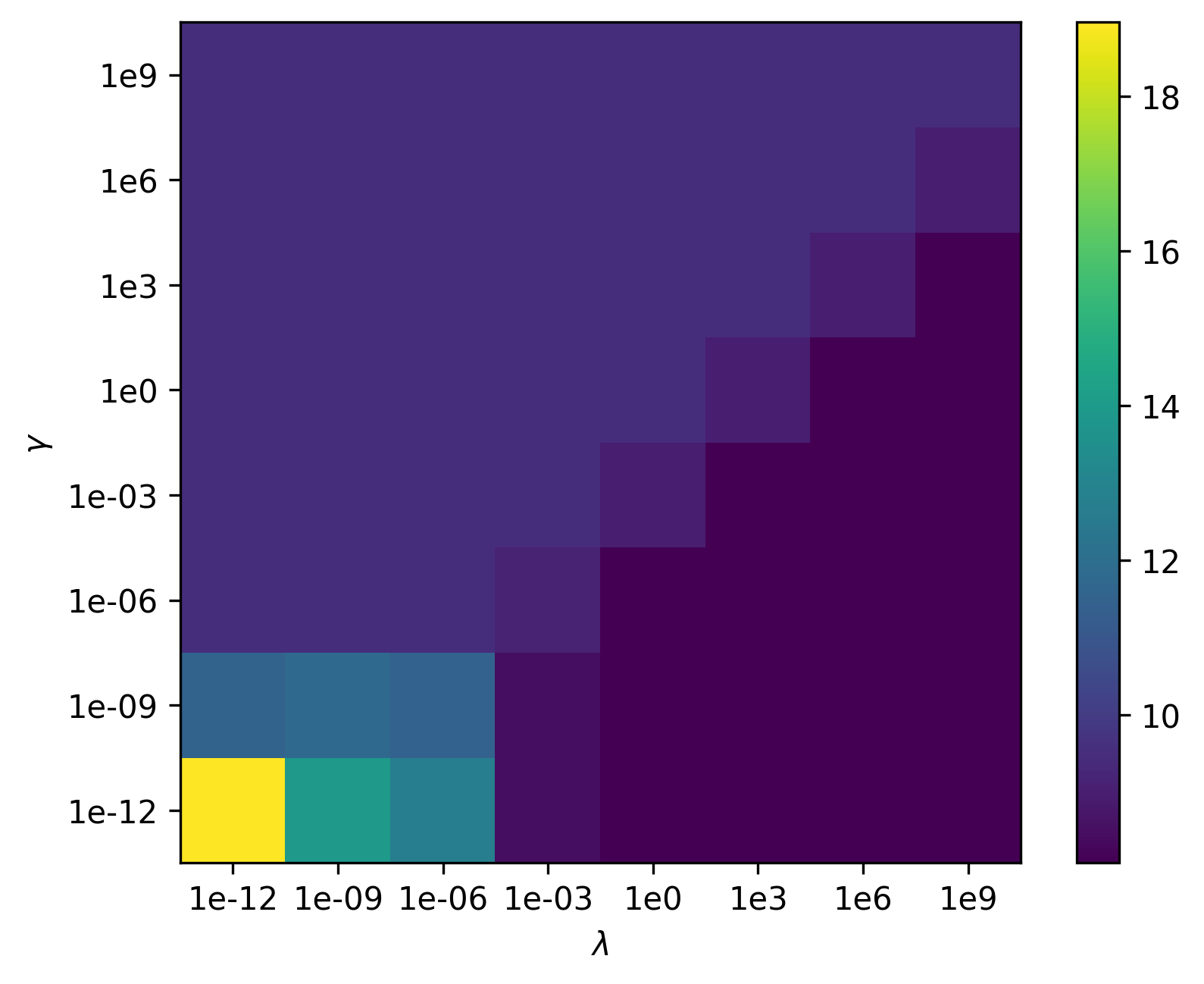}
    \caption{\scriptsize{$\sum_{j \in [m]}\sum_{k \in [d]}|\widehat{W}_{jk}|$}}
    \label{fig:primal_dal_gamma_pos_obj_val_reg}
  \end{subfigure}
  \hfill
  \begin{subfigure}[b]{0.32\textwidth}
    \includegraphics[width=\linewidth]{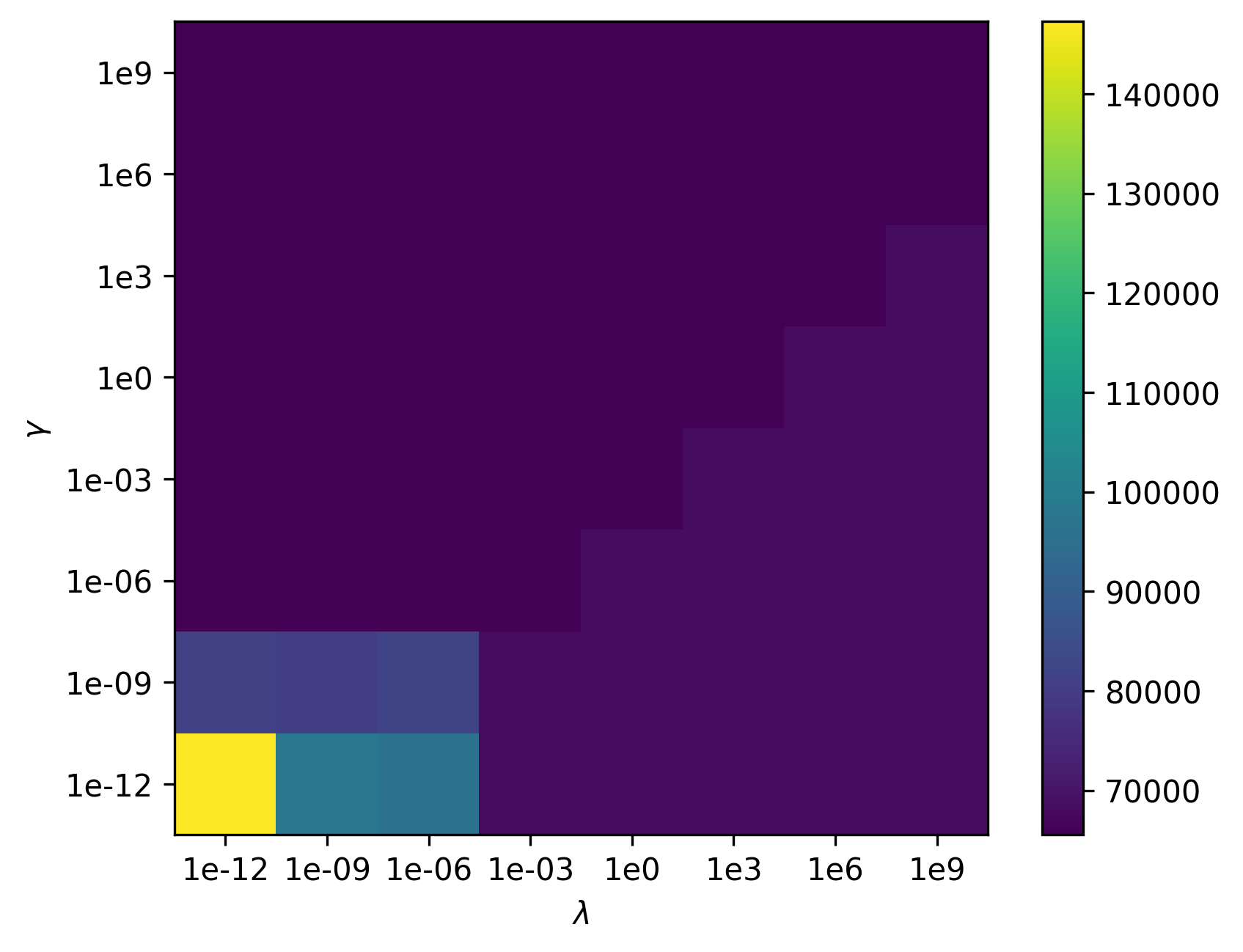}
    \caption{\scriptsize{$\sum_{i \in [N]}\sum_{j \in [m]}\max\{0, b_{ij} - \inner{\bs{w}_j, \bs{\obs}_i}\}$}}
    \label{fig:primal_dal_gamma_pos_obj_val_pen}
  \end{subfigure}
  \caption{Affect of $(\lambda, \gamma)$ on the component function values of primal-DAL problem \eqref{prob:fixedPrimalTrainLinear} when $\gamma > 0$ }
  \label{fig:primal_dal_gamma_pos}
\end{figure}
Regardless of the choice of $(\lambda, \gamma)$, we see that the dominating term is the penalty function $\phi$. One interesting observation is that for a fixed regularization parameter $\lambda \in \{10^0, 10^3, 10^6, 10^9\}$, the average in-sample duality gap (part (a)) goes down as the penalty parameter $\gamma$ increases. This seems to coincide with Corollary \ref{cor:squeezing}, as the constraints $\bs{Ax}_i^\star \geq \bs{W\obs}_i$ are enforced in \eqref{prob:fixedPrimalTrainLinear} and the constraints $\bs{W\obs}_i \geq \bs{b}_i$ are penalized when violated through the function $\phi$.

Finally, we observe the affect of varying the parameter $\alpha$ on the solution $(\widehat{\bs{W}}, (\hat{\bs{x}}_i))$ of the dual-DAL problem \eqref{prob:fixedDualModTrain}. We compute the optimal value $\frac{1}{N}\sum_{i \in [N]}(\inner{\bs{c}, \hat{\bs{x}}_i} - \inner{\alpha\widehat{\bs{W}}\bs{\obs}_i - \bs{b}_i, \bs{y}_i^\star})$ of \eqref{prob:fixedDualModTrain} as well as the average in-sample optimality gap $\frac{1}{N}\sum_{i \in [N]}|\inner{\bs{c}, \bs{x}_i^\star} - \inner{\widehat{\bs{W}}\bs{\obs}_i, \bs{y}_i^\star}|$. Note that the latter value requires absolute values on the summands as the differences $\inner{\bs{c}, \bs{x}_i^\star} - \inner{\widehat{\bs{W}}\bs{\obs}_i, \bs{y}_i^\star}$ may be negative.
We plot the median of these values in Figure
{\ref{fig:dual_dal_component_function_vals}}.
\begin{figure}[h!]
    \centering
    \includegraphics[scale = 0.5]{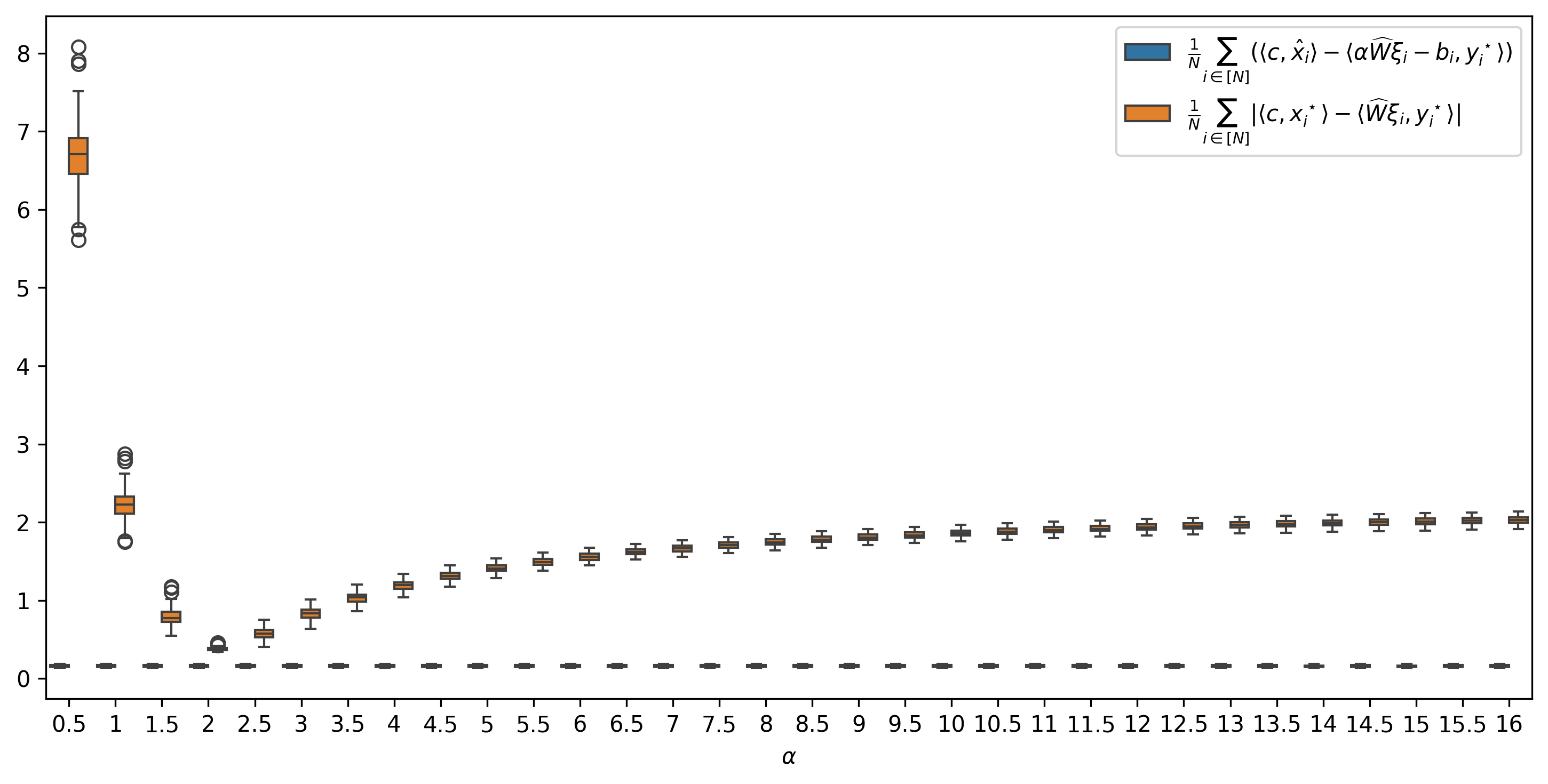}
    \caption{Affect of $\alpha$ on the solution $(\widehat{\bs{W}}, (\hat{\bs{x}}_i))$ of the dual-DAL problem \eqref{prob:fixedDualModTrain}}
    \label{fig:dual_dal_component_function_vals}
\end{figure}
We see that the optimal value of $\eqref{prob:fixedDualModTrain}$ is the same regardless of $\alpha$. However, the average in-sample optimality gap $\frac{1}{N}\sum_{i \in [N]}|\inner{\bs{c}, \bs{x}_i^\star} - \inner{\widehat{\bs{W}}\bs{\obs}_i, \bs{y}_i^\star}|$ is large for small values of $\alpha$, achieves a minimum at $\alpha = 2$, and then slowly increases and tapers off as $\alpha$ increases. 

\textit{Solution Method Comparison in Solving Primal-DAL Problem \eqref{prob:fixedPrimalTrainLinear}}: We compare the performance of various solution methods in solving problem \eqref{prob:fixedPrimalTrainLinear}. We use Algorithms \ref{alg:acs} and \ref{alg:ccp}, each with an upper limit of 100 iterations and the function value decrease condition $\frac{\text{obj}^t - \text{obj}^{t + 1}}{\text{obj}^t} < 0.01$ as the termination criterion,
where ``obj'' denotes the objective function value and the superscript is the iteration number. We also use Gurobi's nonconvex solver, which solves problem \eqref{prob:fixedPrimalTrainLinear} as a mixed integer program. We limit Gurobi to 1 hour of solve time. We fix the training dataset size at $N = 1000$. Table \ref{table:fixedPrimalSolutionComparison} shows the median objective function value and runtime of the various solution methods.
\begin{table}[h!]
\small
\centering
\begin{tabular}{l|cc|cc|}
 & \multicolumn{2}{c|}{$(\lambda, \gamma) = (10^{-3}, 0)$} & \multicolumn{2}{c|}{$(\lambda, \gamma) = (10^{-3}, 10^{-3})$} \\
\cline{2-3} \cline{4-5}
Solution Method & Obj. Value & Time (s) & Obj. Value & Time (s) \\
\hline
Algorithm \ref{alg:acs} & {4.78} &  {51.51} & {70.22} &  {36.32} \\
Algorithm \ref{alg:ccp} & {7.73} & {1062.45} &  {70.95} & {1008.97} \\
Gurobi & {4.80} &  {3613.07} & {70.22} &  {3612.05} \\
\hline
\end{tabular}
\caption{Comparison of solution methods in solving primal-DAL problem \eqref{prob:fixedPrimalTrainLinear}.}
\label{table:fixedPrimalSolutionComparison}
\end{table} 
Algorithm \ref{alg:acs} and Gurobi achieve roughly the same objective function value {under both sets of parameters}, however Algorithm \ref{alg:acs} is 2 orders of magnitude faster. Algorithm \ref{alg:ccp} performs the worst in terms of the objective function value, and its runtime is somewhere between that of the other two solution methods.

\textit{Hyperparameter Tuning}: In the absence of additional constraints on the model $\bs{W}$, we tune the primal-DAL problem \eqref{prob:fixedPrimalTrainLinear} with candidates $(\lambda, \gamma) \in \{10^{-12}, 10^{-9}, 10^{-6}, 10^{-3}, 10^{0}, 10^{3}\} \times \{0\}$ using the feasibility metric $\chi\{\bs{Ax}_i^\star \geq \bs{W\obs}\}$ (higher is better). In the presence of the additional constraints $\bs{W\obs}_i \geq \bs{b}_i, ~ \forall i \in [N],$ we tune \eqref{prob:fixedPrimalTrainLinear} using the same metric and candidates $(\lambda, \gamma) \in {\{10^{-12}, 10^{-6}, 10^{0}, 10^{6}\} \times \{10^5, 10^6, 10^7\}}$. We tune the hyperparameter $\alpha$ in the dual-DAL problem \eqref{prob:fixedDualModTrain} using candidate values $\alpha \in \{0.5, 1, 1.5, 2, 2.5, 3\}$. Lastly, we tune $\alpha_{lasso}$ in the lasso regression problem using sum of squared prediction errors as the metric (lower is better) with candidate values $\{1, 3, 5, 7\}$.

\textit{Additional Results}: Table 
{\ref{tab:synthetic_runtime}} shows the median training time of the different solution methods as the training dataset size $N$ increases.
\begin{table}[h!]
\small
\centering
\begin{tabular}{cccccccc}
    $N$ & Optimistic-DAL & Primal-DAL & Primal-DAL (w/ penalty) & Dual-DAL & LR & Lasso & RF \\ \hline
    250 & 0.12 & 8.29 & 8.13 & 0.41 & $< 0.01$ & $< 0.01$ & 0.18 \\
    500 & 0.23 & 17.38 & 16.89 & 0.92 & $< 0.01$ & $< 0.01$ & 0.27 \\
    750 & 0.29 & 26.82 & 26.78 & 2.81 & $< 0.01$ & $< 0.01$ & 0.32 \\
    1000 & 0.38 & 36.11 & 36.30 & 4.61 & $< 0.01$ & $< 0.01$ & 0.37 \\
    \hline
\end{tabular}
\caption{Training time of different learning problems (in seconds)}
\label{tab:synthetic_runtime}
\end{table}
The DUL models typically solve within 1 second, which is similar to the optimistic and dual-DAL problems (which are both LPs). The nonconvex primal-DAL problem takes slightly longer to solve, but is still generally within 1 minute. {Table \ref{tab:percent_feas_pred_soln_synthetic} shows the percentage of the solutions $\hat{x}_i$ induced by the different prediction models which reside in the corresponding true feasible region. The optimistic- and primal-DAL models achieve a very low $(<1\%)$ value for this metric, and the dual-DAL is slightly better at around $3-6\%$. The standard regression models perform the best at around $18-20\%$. Note that the poor performance of our models for this metric is to be expected, as they are designed to generate predictions that recover the true optimal solution $\bs{x}^\star$, which is inherently at odds with this metric.}
\begin{table}[h!]
\footnotesize
\centering \renewcommand{\arraystretch}{1.2}
\begin{tabular}{cccccccc}
$N$ & Optimistic-DAL & Primal-DAL & Primal-DAL (w/ penalty) & Dual-DAL & {LR} & {Lasso} & {RF} \\
\hline
    250 & 0.46 & 0.36 & 0.41 & 5.73 & 20.42 & 18.75 & 20.24 \\
    500 & 0.26 & 0.25 & 0.26 & 3.57 & 20.61 & 18.84 & 20.36 \\
    750 & 0.17 & 0.17 & 0.17 & 3.21 & 20.50 & 18.66 & 19.89 \\
    1000 & 0.15 & 0.14 & 0.15 & 3.22 & 20.38 & 18.66 & 20.41 \\
\hline
\end{tabular}
\caption{Percentage of solutions $\hat{\bs{x}}_i$ in the validation dataset which are in the true feasible regions.}
\label{tab:percent_feas_pred_soln_synthetic}
\end{table}

\section{Deriving Problem  \eqref{prob:fixedDualModTrain}}\label{sec:deriving_fixed_dual_mod_train}
Consider the C-LP \eqref{prob:downstream} with feasible region $\set{X}(\bs{b})$, where $\bs{b} \in \set{B}$ is arbitrary. Recall that the dual feasible region is $\set{Y} \coloneqq \{\bs{y} \geq \bs{0} ~|~\bs{A}^\top \bs{y} \leq \bs{c}\}$. Denote the set of dual optimal solutions corresponding to $\bs{b}$ as $\set{Y}^\star(\bs{b}) \coloneqq \arg\max_{\bs{y} \geq \bs{0}}\{\inner{\bs{b}, \bs{y}} ~ | ~ \bs{A}^\top \bs{y} \leq \bs{c}\}$ and the corresponding optimal cost as $v^\star(\bs{b})$, i.e., $v^\star(\bs{b}) = \inner{\bs{b}, \bs{y}^\star(\bs{b})}$ for any $\bs{y}^\star(\bs{b}) \in \set{Y}^\star(\bs{b})$; unlike before, here we explicitly show the dependence on $\bs{b}$ for clarity of the derivation.

\begin{definition}[RSPO loss]
Given $\bs{b}$ and a prediction $\hat{\bs{b}}$, the \textit{right-hand side smart predict-then-optimize (RSPO) loss function $\ell_{\text{RSPO}}^{\bs{y}^\star}(\hat{\bs{b}}, \bs{b})$} with respect to $\bs{y}^\star$ is defined as $\ell_{\text{RSPO}}^{\bs{y}^\star}(\hat{\bs{b}}, \bs{b}) \coloneqq v^\star(\bs{b}) - \inner{\bs{b}, \bs{y}^\star(\hat{\bs{b}})}$.
\label{def:rspo_loss}
\end{definition}

A notable drawback of this definition is the dependence on the optimization oracle $\bs{y}^\star$. We consider a variant of the RSPO loss which takes the worst-case solution among vectors $\bs{y} \in \set{Y}^\star(\hat{\bs{b}})$.

\begin{definition}[Unambiguous RSPO loss]
    Given $\bs{b}$ and a prediction $\hat{\bs{b}}$, the \textit{unambiguous RSPO loss function $\ell_{\text{RSPO}}(\hat{\bs{b}}, \bs{b})$} is defined as $\ell_{\text{RSPO}}(\hat{\bs{b}}, \bs{b}) \coloneqq v^\star(\bs{b}) - \min_{\bs{y} \in \set{Y}^\star(\hat{\bs{b}})}\inner{\bs{b}, \bs{y}}$.
\end{definition}

For a fixed right-hand-side vector $\bs{b}$, the RSPO loss may may not be continuous in $\hat{\bs{b}}$ because $\bs{y}^\star(\hat{\bs{b}})$ (and the set $\set{Y}^\star(\hat{\bs{b}})$) may not be continuous in $\hat{\bs{b}}$. Similar to \cite{elmachtoub2022smart}, we will derive a tractable surrogate loss function for $\ell_{RSPO}(\cdot, \cdot)$. To that end, consider a parameter $\alpha \geq 0$ and note that
\begin{equation*}
    \ell_{RSPO}(\hat{\bs{b}}, \bs{b}) = v^\star(\bs{b}) - \min_{\bs{y} \in \set{Y}^\star(\hat{\bs{b}})}\{\inner{\bs{b}, \bs{y}} - \inner{\alpha \hat{\bs{b}}, \bs{y}}\} - \alpha v^\star(\hat{\bs{b}})
\end{equation*}
which follows from the fact that $v^\star(\hat{\bs{b}}) = \inner{\hat{\bs{b}}, \bs{y}}$ for all $\bs{y} \in \set{Y}^\star(\hat{\bs{b}})$. We can replace the constraint $\bs{y} \in \set{Y}^\star(\hat{\bs{b}})$ with $\bs{y} \in \set{Y}$ to obtain an upper bound. Since this is true for any $\alpha \geq 0$, it follows that
\begin{align}
    \ell_{RSPO}(\hat{\bs{b}}, \bs{b}) \leq & \inf_{\alpha \geq 0}\left\{v^\star(\bs{b}) - \min_{\bs{y} \in \set{Y}}\{\inner{\bs{b}, \bs{y}} - \inner{\alpha \hat{\bs{b}}, \bs{y}}\} - \alpha v^\star(\hat{\bs{b}})\right\} \nonumber \\
    & = v^\star(\bs{b}) + \inf_{\alpha \geq 0}\left\{\max_{\bs{y} \in \set{Y}}\{\inner{\alpha \hat{\bs{b}}, \bs{y}} - \inner{\bs{b}, \bs{y}}\} - \alpha v^\star(\hat{\bs{b}})\right\}.
    \label{ineq:rhs_alpha_ub}
\end{align}

In fact, inequality \eqref{ineq:rhs_alpha_ub} can be shown to be an equality using duality theory, and the optimal value of $\alpha$ tends to $\infty$.

\begin{proposition}\label{prop:rspo_representation}
Given $\bs{b}$ and a prediction $\hat{\bs{b}}$, the function $\alpha \mapsto \max_{\bs{y} \in \set{Y}}\{\inner{\alpha \hat{\bs{b}}, \bs{y}} - \inner{\bs{b}, \bs{y}}\} - \alpha v^\star(\hat{\bs{b}})$ is monotone decreasing on $\RR$, and the RSPO loss may be represented as $\ell_{RSPO}(\hat{\bs{b}}, \bs{b}) = v^\star(\bs{b}) + \lim_{\alpha \to \infty}\left\{\max_{\bs{y} \in \set{Y}}\{\inner{\alpha \hat{\bs{b}}, \bs{y}} - \inner{\bs{b}, \bs{y}}\} - \alpha v^\star(\hat{\bs{b}})\right\}$.
\end{proposition}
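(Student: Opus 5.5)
The plan is to write the function in question as
\[
g(\alpha) \coloneqq \max_{\bs{y} \in \set{Y}}\{\inner{\alpha \hat{\bs{b}} - \bs{b}, \bs{y}}\} - \alpha v^\star(\hat{\bs{b}})
= \max_{\bs{y} \in \set{Y}}\big\{\alpha\big(\inner{\hat{\bs{b}}, \bs{y}} - v^\star(\hat{\bs{b}})\big) - \inner{\bs{b}, \bs{y}}\big\}.
\]
Since $v^\star(\hat{\bs{b}}) = \max_{\bs{y}\in\set{Y}}\inner{\hat{\bs{b}},\bs{y}}$, each coefficient $\inner{\hat{\bs{b}}, \bs{y}} - v^\star(\hat{\bs{b}})$ is nonpositive for every $\bs{y}\in\set{Y}$. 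Thus $g$ is a pointwise maximum of affine functions of $\alpha$ with nonpositive slopes, and each of these functions is nonincreasing. A pointwise maximum of nonincreasing functions is nonincreasing, so $g$ is monotone decreasing (nonincreasing) on all of $\RR$. Here I use that $v^\star(\hat{\bs{b}})$ is finite, which I assume for the prediction; this is what gives $\set{Y}^\star(\hat{\bs{b}})\neq\emptyset$.

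Monotonicity implies that the infimum over $\alpha\ge 0$ in \eqref{ineq:rhs_alpha_ub} equals $\lim_{\alpha\to\infty} g(\alpha)$, which exists in $\RR\cup\{-\infty\}$. It then remains to show that this limit equals $-\min_{\bs{y}\in\set{Y}^\star(\hat{\bs{b}})}\inner{\bs{b},\bs{y}}$. One inequality is easy: restricting the max to $\bs{y}\in\set{Y}^\star(\hat{\bs{b}})$ kills the $\alpha$ term, so
\[
g(\alpha)\ge \max_{\bs{y}\in\set{Y}^\star(\hat{\bs{b}})}(-\inner{\bs{b},\bs{y}}) = -\min_{\bs{y}\in\set{Y}^\star(\hat{\bs{b}})}\inner{\bs{b},\bs{y}}
\]
for every $\alpha$. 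This also shows the limit is finite.

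For the reverse inequality I would use LP duality, treating $g(\alpha)$ as an LP value. First, $g(\alpha)<\infty$ for $\alpha$ large: its dual is $\min\{\inner{\bs{c},\bs{x}}:\bs{A}\bs{x}\ge \alpha\hat{\bs{b}}-\bs{b},\bs{x}\ge 0\}$. This is feasible for large $\alpha$ by taking $\bs{x}=\alpha\hat{\bs{x}}+\bs{x}^\star$, using $\bs{A}\bs{x}^\star\ge\bs{b}$ (so $\bs{A}\bs{x}^\star\ge -\bs{b}$ may fail). To handle that, it is cleaner to argue directly on the primal $\max$ via a vertex argument. The polyhedron $\set{Y}$ has finitely many vertices $V$ and a recession cone $R$. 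Finiteness of $v^\star(\hat{\bs{b}})$ gives $\inner{\hat{\bs{b}},\bs{d}}\le 0$ for all $\bs{d}\in R$.

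The main obstacle is controlling recession directions with $\inner{\hat{\bs{b}},\bs{d}}=0$ but $\inner{-\bs{b},\bs{d}}>0$. For such $\bs{d}$, I would argue that $\bs{d}$ is also a recession direction of the face $\set{Y}^\star(\hat{\bs{b}})$, so that $\min_{\set{Y}^\star(\hat{\bs{b}})}\inner{\bs{b},\bs{y}}=-\infty$ and both sides equal $+\infty$. In the bounded case, I would instead argue by parametric LP that for $\alpha$ large the maximizer of $\inner{\alpha\hat{\bs{b}}-\bs{b},\bs{y}}$ over $\set{Y}$ can be taken at a vertex in $V$. If $\alpha$ exceeds
\[
\max_{\bs{v}\in V\setminus\set{Y}^\star(\hat{\bs{b}})} \frac{\inner{\bs{b},\bs{u}-\bs{v}}}{v^\star(\hat{\bs{b}})-\inner{\hat{\bs{b}},\bs{v}}},
\]
for a fixed optimal vertex $\bs{u}$, then that vertex must lie in $\set{Y}^\star(\hat{\bs{b}})$, since non-optimal vertices pay a penalty $\alpha$ times a strictly positive gap. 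Hence $g(\alpha)=\max_{\bs{v}\in V\cap\set{Y}^\star(\hat{\bs{b}})}(-\inner{\bs{b},\bs{v}})$, which equals $-\min_{\set{Y}^\star(\hat{\bs{b}})}\inner{\bs{b},\bs{y}}$, for all sufficiently large $\alpha$. Adding $v^\star(\bs{b})$ yields the claimed representation of $\ell_{RSPO}$ and equality in \eqref{ineq:rhs_alpha_ub}.
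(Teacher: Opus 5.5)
Your proof is correct, and it takes a different route from the paper. The paper gives no argument of its own: it identifies $\mathcal{Y}$, $-\hat{\bs{b}}$ and $-\bs{b}$ with the feasible set and the predicted and true costs of Elmachtoub and Grigas, and cites their Proposition~2, which is proved in a setting where the feasible region is compact. Here $\mathcal{Y}=\{\bs{y}\ge\bs{0} : \bs{A}^\top\bs{y}\le\bs{c}\}$ is typically an unbounded polyhedron, so the citation does not literally apply, and your direct argument is what actually covers this case.

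You isolate the recession directions $\bs{d}$ with $\langle\hat{\bs{b}},\bs{d}\rangle=0>\langle\bs{b},\bs{d}\rangle$, for which $g\equiv+\infty$ and $\ell_{RSPO}(\hat{\bs{b}},\bs{b})=+\infty$. Otherwise you show via vertices that $g$ is constant beyond an explicit threshold. So the infimum over $\alpha\ge 0$ is attained at a finite $\alpha$, which is sharper than the remark that the optimal $\alpha$ tends to $\infty$. The price is a case split and a vertex enumeration.

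The ``duality theory'' mentioned before the proposition does all of this at once. Consider the LP $\max\{\langle-\bs{b},\bs{y}\rangle : \bs{y}\in\mathcal{Y},\ \langle\hat{\bs{b}},\bs{y}\rangle\ge v^\star(\hat{\bs{b}})\}$, whose feasible set is exactly $\mathcal{Y}^\star(\hat{\bs{b}})$. Its dual is $\min\{\langle\bs{c},\bs{x}\rangle-\alpha v^\star(\hat{\bs{b}}) : \bs{A}\bs{x}\ge\alpha\hat{\bs{b}}-\bs{b},\ \bs{x}\ge\bs{0},\ \alpha\ge 0\}$, whose value is $\inf_{\alpha\ge0}g(\alpha)$. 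Strong duality then gives equality with attainment when the value is finite, and dual infeasibility ($g\equiv+\infty$ on $\alpha\ge0$) when it is not. You were one step from this when you wrote the inner dual for fixed $\alpha$. The fix is to keep $\alpha$ as a dual variable rather than trying to exhibit a feasible $\bs{x}$.

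When you write it up, tidy the following points.
\begin{itemize}
\item Drop the abandoned feasibility step entirely. The claim $g(\alpha)<\infty$ for large $\alpha$ fails precisely in your problematic case, and $\alpha\hat{\bs{x}}+\bs{x}^\star$ only gives $\bs{A}\bs{x}\ge\alpha\hat{\bs{b}}+\bs{b}$.
\item Your lower bound shows the limit is $>-\infty$, not that it is finite.
\item Your ``bounded case'' should mean the absence of a problematic direction, not boundedness of $\mathcal{Y}$. You then need that the recession cone is generated by finitely many rays, each with either $\langle\hat{\bs{b}},\bs{d}\rangle<0$ (harmless once $\alpha$ is large) or $\langle\hat{\bs{b}},\bs{d}\rangle=0=\langle\bs{b},\bs{d}\rangle$ (using finiteness of $v^\star(\bs{b})$). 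This makes the inner LP bounded for large $\alpha$, and your vertex threshold must be combined with this one.
\item You also need that $\mathcal{Y}\subseteq\mathbb{R}^m_+$ is pointed, so the maximum is attained at a vertex.
\item The identity $\max_{\bs{v}\in V\cap\mathcal{Y}^\star(\hat{\bs{b}})}(-\langle\bs{b},\bs{v}\rangle)=-\min_{\bs{y}\in\mathcal{Y}^\star(\hat{\bs{b}})}\langle\bs{b},\bs{y}\rangle$ follows at once by sandwiching with your lower bound.
\end{itemize}
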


\proof{Proof of Proposition \ref{prop:rspo_representation}:}
    See the proof of Proposition 2 in \citep{elmachtoub2022smart}.

\endproof

Using an arbitrary hypothesis class $\set{P}$ of prediction functions, the loss function $\ell_{RSPO}$ as given in Proposition \ref{prop:rspo_representation}, and a dataset $\set{D}_N = \{(\bs{\obs}_i, \bs{b}_i)\}_{i \in [N]}$ of observations sampled independently from $\set{B} \times \rvset$, we have that
\begin{align}
    & \min_{p \in \set{P}}\frac{1}{N}\sum_{i = 1}^N\ell_{RSPO}(p(\bs{\obs}_i), \bs{b}_i) \nonumber\\
    & = \min_{p \in \set{P}} \frac{1}{N}\sum_{i = 1}^N\Bigg[v^\star(\bs{b}_i) + \lim_{\alpha_i \to \infty}\left\{\max_{\bs{y} \in \set{Y}}\{\inner{\alpha_i p(\bs{\obs}_i), \bs{y}} - \inner{\bs{b}_i, \bs{y}}\} - \alpha_i v^\star(p(\bs{\obs}_i))\right\} \Bigg]\nonumber\\
    & = \min_{p \in \set{P}} \frac{1}{N}\sum_{i = 1}^N\Bigg[v^\star(\bs{b}_i) + \lim_{\alpha_i \to \infty}\left\{\max_{\bs{y} \in \set{Y}}\{\inner{\alpha_i p(\bs{\obs}_i), \bs{y}} - \inner{\bs{b}_i, \bs{y}}\} - \inner{\alpha_i p(\bs{\obs}_i), \bs{y}^\star(\alpha_i p(\bs{\obs}_i))}\right\} \Bigg]\nonumber\\
    & = \min_{p \in \set{P}} \frac{1}{N}\lim_{\alpha \to \infty}\sum_{i = 1}^N\Bigg[v^\star(\bs{b}_i) + \max_{\bs{y} \in \set{Y}}\{\inner{\alpha p(\bs{\obs}_i), \bs{y}} - \inner{\bs{b}_i, \bs{y}}\} - \inner{\alpha p(\bs{\obs}_i), \bs{y}^\star(\alpha p(\bs{\obs}_i))} \Bigg]\nonumber\\
    & \leq \min_{p \in \set{P}} \frac{1}{N}\sum_{i = 1}^N\Bigg[v^\star(\bs{b}_i) + \max_{\bs{y} \in \set{Y}}\{\inner{\alpha' p(\bs{\obs}_i), \bs{y}} - \inner{\bs{b}_i, \bs{y}}\} - \inner{\alpha' p(\bs{\obs}_i), \bs{y}^\star(\alpha' p(\bs{\obs}_i))} \Bigg]\nonumber\\
    & \leq \min_{p \in \set{P}} \frac{1}{N}\sum_{i = 1}^N\Bigg[v^\star(\bs{b}_i) + \max_{\bs{y} \in \set{Y}}\{\inner{\alpha' p(\bs{\obs}_i), \bs{y}} - \inner{\bs{b}_i, \bs{y}}\} - \inner{\alpha' p(\bs{\obs}_i), \bs{y}^\star(\bs{b}_i)} \Bigg].
    \label{ineq:rhs_spo_plus_derivation}
\end{align}
where $\alpha' \geq 0$ is arbitrary. Note that the first equality holds by Proposition \ref{prop:rspo_representation}; the second equality holds since for any positive scalar $\alpha, \alpha v^\star(\bs{b}) = v^\star(\alpha \bs{b}) = (\alpha \bs{b})^{\top}\bs{y}^\star(\alpha \bs{b})$; the third equality holds since all $\alpha_i$ tend towards $\infty$; the first inequality holds from inequality \eqref{ineq:rhs_alpha_ub} with $\alpha' \geq 0$; and the second inequality holds since $\bs{y}^\star(\bs{b}_i)$ is feasible to problem the dual problem with the cost vector $\alpha' p(\bs{\obs}_i)$. We now arrive at the definition of the RSPO+ loss function, which is exactly the summand in \eqref{ineq:rhs_spo_plus_derivation}.
\begin{definition}[RSPO+ loss]
    Given $\bs{b}$ and a prediction $\hat{\bs{b}}$, the \textit{RSPO+ loss function $\ell_{\text{RSPO+}}^\alpha(\hat{\bs{b}}, \bs{b})$} is defined as $\ell_{\text{RSPO+}}^\alpha(\hat{\bs{b}}, \bs{b}) \coloneqq v^\star(\bs{b}) + \max_{\bs{y} \in \set{Y}}\left\{\inner{\alpha\hat{\bs{b}}, \bs{y}} - \inner{\bs{b}, \bs{y}}\right\} - \inner{\alpha\hat{\bs{b}}, \bs{y}^\star(\bs{b})}$ where $\alpha \geq 0$ is an input parameter.
    \label{def:rhs_true_spo}
\end{definition}

To finish with the derivation, we see by linear programming strong duality that
\begin{align*}
    \ell_{RSPO+}^\alpha(\bs{W\obs}_i, \bs{b}_i) & = \max_{\bs{y} \in \set{Y}}\left\{\inner{\alpha\bs{W\obs}_i - \bs{b}_i, \bs{y}}\right\} - \inner{\alpha\bs{W\obs}_i - \bs{b}_i, \bs{y}^\star(\bs{b}_i)} \\
    & = \min_{\bs{x}_i \geq \bs{0}}\{\inner{\bs{c}, \bs{x}_i} ~ | ~ \bs{Ax}_i \geq \alpha\bs{W\obs}_i - \bs{b}_i\} - \inner{\alpha\bs{W\obs}_i - \bs{b}_i, \bs{y}^\star(\bs{b}_i)}.
\end{align*}
Hence, the empirical risk minimization problem $\min_{\bs{W}}\frac{1}{N}\sum_{i \in [N]}\ell_{RSPO+}^\alpha(\bs{W\obs}_i, \bs{b}_i)$ can be written as
\begin{align*}
    \min_{\bs{W}, (\bs{x}_i)} \quad & \frac{1}{N} \sum_{i = 1}^N (\inner{\bs{c}, \bs{x}_i} - \inner{\alpha \bs{W\obs}_i - \bs{b}_i, \bs{y}^\star(\bs{b}_i)}) \\
    \text{s.t.} \quad & \bs{Ax}_i \geq \alpha \bs{W\obs}_i - \bs{b}_i, ~ \forall i \in [N], \\
    & \bs{x}_i \geq \bs{0}, ~ \forall i \in [N].
\end{align*}

\section{Details of Network Optimization Experiment}\label{sec:network_optimization_details}

\textit{Network Optimization Problem}: We consider a network optimization problem defined by the following: a set $\set{F}$ of factories, a set $\set{H}$ of warehouses, and a set $\set{S}$ of stores. Units of some arbitrary good must travel from factories to warehouses, and then to the stores, where demand is realized. We assume that there is an edge in the network between each factory/warehouse as well as each warehouse/store. We denote by $c_{fh}^1$ the unit shipping cost from factory $f$ to warehouse $h$, and $c_{hs}^2$ the unit shipping cost from warehouse $h$ to store $s$. We allow for demand to be met at a store $s$ from an external supplier, at a unit cost of $\beta > \max_{f \in \set{F}, h \in \set{H}}c_{fh}^1 + \max_{h \in \set{H}, s \in \set{S}}c_{hs}^2$. We assume that there is a capacity of $M$ units of the good which may be processed at each warehouse. Lastly, we denote by $\tilde{d}_s$ the uncertain demand for the good at store $s$. We define decision variables $x_{fh}^1$ as the number of units to ship from factory $f$ to warehouse $h$, $x_{hs}^2$ as the number of units to ship from warehouse $h$ to store $s$, and $x_s^3$ as the number of units to purchase from an external source to send to store $s$. Using this data, we write the network optimization problem as
\begin{subequations}\label{prob:network}
\begin{align}
\min_{\bs{x}^1, \bs{x}^2, \bs{x}^3} \quad & \sum_{f \in \set{F}}\sum_{h \in \set{H}}c_{fh}^1 x_{fh}^1 + \sum_{h \in \set{H}}\sum_{s \in \set{S}}c_{hs}^2 x_{hs}^2 + \beta \sum_{s \in \set{S}}x_s^3 \\
\text{s.t.} \quad & \sum_{f \in \set{F}}x_{fh}^1 = \sum_{s \in \set{S}}x_{hs}^2, ~ \forall h \in \set{H}, \label{constr:flow_balance}\\
\quad & \sum_{f \in \set{F}}x_{fh}^1 \leq M, ~ \forall h \in \set{H}, \label{constr:warehouse_capacity}\\
\quad & x_s^3 \leq \frac{1}{2}\sum_{h \in \set{H}}x_{hs}^2, ~ \forall s \in \set{S}, \label{constr:capacity_on_external_purchasing}\\
\quad & \sum_{h \in \set{H}}x_{hs}^2 + x_s^3 \geq \tilde{d}_s, ~ \forall s \in \set{S}, \label{constr:demand}\\
\quad & \bs{x}^1, \bs{x}^2, \bs{x}^3 \geq \bs{0}.
\end{align}
\end{subequations}
The objective is to minimize total cost, i.e., distribution costs along the network and costs incurred from an external supplier. Constraint \eqref{constr:flow_balance} is a flow balance constraint at the warehouses, whereas constraint \eqref{constr:warehouse_capacity} is a capacity constraint at the warehouses. Constraint \eqref{constr:capacity_on_external_purchasing} sets an upper bound on the number of units that can be purchased from an external supplier. Lastly, constraint \eqref{constr:demand} ensures that the uncertain demand is satisfied at each store.

Regarding the optimization problem data, we consider a contrived example with $|\set{F}| = 5$ factories at locations that are centrally located in the United States: Des Moines, Iowa; Kansas City, Missouri; Denver, Colorado; Wichita, Kansas; and St. Louis, Missouri. We consider $|\set{H}| = 7$ warehouses in the following cities: Portland, Oregan; Salt Lake City, Utah; Phoenix, Arizona; Charlotte, North Carolina; Atlanta, Georgia; Cincinnati, Ohio; and Chicago, Illinois. Lastly, we consider $|\set{S}| = 5$ stores in larger metropolitan areas: Dallas, Texas; Los Angeles, California; New York, New York; Orlando, Florida; and Seattle, Washington. Hence the network optimization problem \eqref{prob:network} has a total of 75 variables and 24 constraints. We compute the values $c^1$ and $c^2$ using the distance between the respective cities. Namely, we obtain the distance in kilometers using the dataset provided in \citep{erickson_county_city_driving_2014} and divide by 1000. We set the parameter $\beta = 10$. Lastly, we set the capacity parameter $M$ according to a real-world historical dataset.

\textit{Context Data}: Based on the historical sales data of a company and their distribution network, we synthetically generate a larger network to include major cities in the United States, described in detail above. For the contextual features, we use average daily temperature from each city where a store is located, the day of the week, and the month. Because of the sparsity of the weekend data, we only consider Monday through Friday. We convert the categorical ``day of the week'' and ``month'' features are to numeric features by one-hot encoding \citep{bishop2006pattern}. The result is a context vector $\bs{\obs}_i \in \RR^{21}$, where the first feature is unity for an intercept term, the next 5 features are the average temperature in each of the 5 cities corresponding to the store locations, the next 11 features correspond to the month, and the last 4 correspond to the day of the week. Associated with this is a vector $\bs{b}_i \in \RR^5$, i.e., one sales/demand observation for each city. We note that the linear model $\bs{W} \in \RR^{5 \times 21}$.

\textit{Learning Problems}: Because of the structure of the network optimization problem and the context data, we must slightly modify the learning problems. To do this, we define the submatrix $\bs{A}^=$ of the constraint matrix $\bs{A}$ generated by problem \eqref{prob:network} corresponding to the equality constraints, and similarly for $\bs{A}^{\leq}$ and $\bs{A}^{\geq}$. We also consider the associated subvectors $\bs{b}^=, \bs{b}^{\leq}$, and $\tilde{\bs{b}}^{\geq}$, and their respective dual vectors $\bs{y}^=, \bs{y}^{\leq}$, and $\bs{y}^{\geq}$. Observe that we are only predicting components for the uncertain subvector $\tilde{\bs{b}}^{\geq}$. Additionally, we want to enforce some of the components of the model $\bs{W}$ to be equal to 0. Take for example the component $W_{13}$. This component corresponds to the prediction of demand in store \#1 since it is in the first row of $\bs{W}$. However, the inner product $\inner{\bs{w}_1, \bs{\obs}}$ contains the term $W_{13}\obs_{3}$, where $\obs_{3}$ corresponds to a realization of the average daily temperature corresponding to store \#2 (recall that the first component of $\bs{\obs}$ is unity). That is, we do not want temperature data from one store to affect the prediction of demand in another store. We let $\set{W}^0 \coloneqq \{(j, k) \in [5] \times [6]\setminus\{1\} ~ | ~ k \neq j + 1\}$ be the set of indices for which the correpsponding component of $\bs{W}$ is set to 0. These indices correspond to the off-diagonal elements of the $5 \times 5$ submatrix corresponding to the temperature features, and is directly to the right of the first column of $\bs{W}$ (the intercept column).

We update the optimistic-DAL problem \eqref{prob:optimisticTrainLinear} as
\begin{align}\label{prob:optimisticTrainLinear_network}
\min_{\bs{W}} \quad & \left(\inner{\bs{c} , \bs{x}_{i}^{\star}} -  \left(\inner{\bs{b}_i^=, (\bs{y}_i^=)^\star} + \inner{\bs{b}_i^\leq, (\bs{y}_i^\leq)^\star} + \inner{\bs{W\obs}_i, (\bs{y}_i^\geq)^\star}\right)\right) \nonumber \\
\text{s.t.} \quad & \bs{A}^\geq\bs{x}_{i}^{\star} \geq \bs{W\obs}_{i}, ~ \forall i \in [N], \nonumber \\
\quad & W_{jk} = 0, ~ \forall (j, k) \in \set{W}^0.
\end{align}
We update the primal-DAL problem \eqref{prob:fixedPrimalTrainLinear} as
\begin{align}\label{prob:fixedPrimalTrainLinear_network}
\min_{\bs{W}, (\bs{y}_i)} \quad & F(\, \bs{W}, (\bs{y}_i)\,) \nonumber \\
\text{s.t.} \quad & \bs{A}^\geq\bs{x}_{i}^{\star} \geq \bs{W\obs}_{i}, ~ \forall i \in [N], \nonumber \\
\quad & (\bs{A}^{=})^\top \bs{y}_i^= + (\bs{A}^\leq)^\top \bs{y}_i^\leq + (\bs{A}^{\geq})^\top \bs{y}_i^\geq \leq \bs{c}, ~ \forall i \in [N] \nonumber \\
\quad & \bs{y}^\leq \leq \bs{0}, \nonumber \\
\quad & \bs{y}^\geq \geq \bs{0}, \nonumber \\
\quad & W_{jk} = 0, ~ \forall (j, k) \in \set{W}^0.
\end{align}
where the objective function is defined as 
\begin{align*}
    F( \, \bs{W}, (\bs{y}_i) \,) = \frac{1}{N} \sum_{i \in [N]} \left(\inner{\bs{c} , \bs{x}_{i}^{\star}} -  \left(\inner{\bs{b}_i^=, \bs{y}_i^=} + \inner{\bs{b}_i^\leq, \bs{y}_i^\leq} + \inner{\bs{W\obs}_i, \bs{y}_i^\geq}\right)\right) + \lambda \, r(\bs{W}) + \gamma \, \phi(\bs{W}).
\end{align*}
The dual-DAL problem \eqref{prob:fixedDualModTrain} becomes
\begin{align}\label{prob:fixedDualModTrain_network}
\min_{\bs{W}, (\bs{x}_i)} \quad & \frac{1}{N}\sum_{i \in [N]}\left(\inner{\bs{c} , \bs{x}_{i}} -  \left(\inner{\bs{b}_i^=, (\bs{y}_i^=)^\star} + \inner{\bs{b}_i^\leq, (\bs{y}_i^\leq)^\star} + \inner{\alpha\bs{W\obs}_i - \bs{b}_i, (\bs{y}_i^\geq)^\star}\right)\right) \nonumber \\
\text{s.t.} \quad & \bs{A}^\geq\bs{x}_{i} \geq \alpha\bs{W\obs}_{i} - \bs{b}_i^\geq, ~ \forall i \in [N], \nonumber \\
\quad & \bs{A}^=\bs{x}_{i} = (\alpha - 1)\bs{b}_i^=, ~ \forall i \in [N], \nonumber \\
\quad & \bs{A}^\leq\bs{x}_{i} \leq (\alpha - 1)\bs{b}_i^\leq, ~ \forall i \in [N], \nonumber \\
\quad & \bs{x}_i \geq 0, ~ \forall i \in [N], \nonumber \\
\quad & W_{jk} = 0, ~ \forall (j, k) \in \set{W}^0.
\end{align}
We see that problem \eqref{prob:fixedDualModTrain_network} perturbs the right-hand side values corresponding to the constraints for which we are not generating predictions ($\bs{b}_i^=$ and $\bs{b}_i^\leq$). Hence the only choice for that makes sense is $\alpha = 2$. Regarding the DUL models, we solve the linear regression problem
\begin{align}
\label{prob:linearRegression_network}
\min_{\bs{W} \in \mathbb{R}^{5 \times 21}} \quad & ||\mathfrak{X} W^\top - \mathfrak{B}||_F^2 \nonumber \\
\text{s.t.} \quad & W_{jk} = 0, ~ \forall (j, k) \in \set{W}^0.
\end{align}
where
$\mathfrak{X} = \begin{bmatrix}
\bs{\obs}_{1}^\top\\
\vdots\\
\bs{\obs}_{N}^\top
\end{bmatrix} \in \RR^{N \times 21}$,
$\mathfrak{B} = \begin{bmatrix}
\bs{b}_{1}^\top\\
\vdots\\
\bs{b}_{N}^\top
\end{bmatrix} \in \RR^{N \times 5}$, and $N$ is the number of training datapoints. We also solve the lasso regression problem
\begin{align}
\label{prob:lassoRegression_network}
\min_{\bs{W} \in \mathbb{R}^{5 \times 21}} \quad & ||\mathfrak{X} W^\top - \mathfrak{B}||_F^2 + \alpha_{lasso}\sum_{j \in [m]}\sum_{k \in [d]}|W_{jk}| \nonumber \\
\text{s.t.} \quad & W_{jk} = 0, ~ \forall (j, k) \in \set{W}^0.
\end{align}

\textit{Hyperparameter Tuning}: For the primal-DAL problem, we do not tune with the feasibility metric $\chi\{\bs{A}^\geq\bs{x}_i^\star \geq \bs{W\obs}_i\}$ as was done in the synthetic experiment in \S\ref{subsec:synthetic_data_experiments}. This is because the zero matrix $\bs{W} \equiv \bs{0}$ is feasible to the constraints $\bs{A}^\geq\bs{x}_i^\star \geq \bs{W\obs}_i$ in the network flow problem \eqref{prob:network} and we want to discourage this problem from producing such a model. Instead, we utilize the predicted optimality gap metric $\inner{\bs{c}, \bs{x}_i^\star} - \inner{p(\bs{\obs}_i^v), \bs{y}_i^\star}$ (lower is better), which we compute only for datapoints such that $\bs{A}^\geq\bs{x}_i^\star \geq \bs{W\obs}_i$. In the absence of additional constraints on the model $\bs{W}$, we tune with candidates $(\lambda, \gamma) \in \{10^{-12}, 10^{-9}, 10^{-6}, 10^{-3}, 10^0, 10^3\} \times \{0\}$ and in the presence of the additional constraints $\bs{W\obs}_i \geq \bs{b}_i^\geq, ~ \forall i \in [N],$ we tune with candidates $(\lambda, \gamma) \in \{10^{-12}, 10^{-6}, 10^0, 10^6 \}^2$.
Unlike the synthetic data experiments, we set $\alpha = 2$ in the dual-DAL problem instead of tuning this parameter. The reason for this is because the network optimization problem we are considering contains constraints whose right-hand side value we are not predicting (see earlier in Appendix \S\ref{sec:network_optimization_details} for more details). Finally, we tune $\alpha_{lasso}$ in the lasso regression problem using the sum of square prediction errors as the metric, with candidate values $\{1, 3, 5, 7\}$.

\end{document}